\documentclass[letterpaper, 11pt,  reqno]{amsart}

\usepackage{amsmath,amssymb,amscd,amsthm,amsxtra, esint}
\usepackage{color}
\usepackage{pdfsync}

\usepackage{hyperref}
\setlength{\pdfpagewidth}{8.50in}
\setlength{\pdfpageheight}{11.00in}

\usepackage[english, french]{babel}

\allowdisplaybreaks

\newtheorem{thm}{Theorem}[section]
\newtheorem{lem}[thm]{Lemma}

\newtheorem{rem}{Remark}
\newtheorem{cor}[thm]{Corollary}
\newtheorem{defn}{Definition}[section]

\newcommand{\beq}{\begin{equation}}
\newcommand{\eeq}{\end{equation}}
\newcommand{\ben}{\begin{eqnarray}}
\newcommand{\een}{\end{eqnarray}}
\newcommand{\beno}{\begin{eqnarray*}}
\newcommand{\eeno}{\end{eqnarray*}}
\newcommand{\noi}{\noindent}
\newcommand{\les}{\lesssim}

\newcommand{\R}{\mathbb{R}}

\textheight=8.8in \textwidth=6.28in
\topmargin=0mm \oddsidemargin=0mm \evensidemargin=0mm

\begin{document}

\baselineskip = 15pt

\selectlanguage{english}

\title[2D stationary MHD]
{Liouville-type theorems for the stationary MHD equations in 2D}

\author[W.~Wang]{Wendong Wang}
\address{
Wendong Wang\\
School of Mathematical Sciences\\
Dalian University of Technology\\
Dalian 116024\\
China\\
\&Mathematical Institute\\
University of Oxford\\
 Oxford OX2 6GG, UK}

\email{wendong@dlut.edu.cn}

\author[Y.~Wang]{Yuzhao Wang}
\address{
Yuzhao Wang\\
School of Mathematics\\
University of Birmingham\\
Watson Building\\
Edgbaston\\
Birmingham
B15 2TT}

\email{y.wang.14@bham.ac.uk}

%

\date{\today}
\maketitle


\begin{abstract}
This note is devoted to investigating Liouville type properties of the two dimensional  stationary incompressible
Magnetohydrodynamics equations.
More precisely, under smallness conditions only on the magnetic field, we show that there are no non-trivial solutions
to MHD equations either the Dirichlet integral or some $L^p$ norm of the velocity-magnetic fields
are finite.
In particular, these results generalize the corresponding Liouville type properties for the 2D Navier-Stokes equations,
such as Gilbarg-Weinberger \cite{GW1978} and Koch-Nadirashvili-Seregin-Sverak \cite{KNSS},
to the MHD setting.
\end{abstract}

{\small {\bf Keywords:} Liouville type theorems, MHD equations, Navier-Stokes equations}

\setcounter{equation}{0}
\section{Introduction}

In this note, the main concern is the two dimensional (2D) stationary incompressible Magnetohydrodynamics (MHD) equations on the whole plane $\mathbb{R}^2$:
\begin{equation}\label{eq:MHD}
\left\{\begin{array}{llll}
-\mu\Delta u+u\cdot \nabla u+\nabla \pi=b\cdot\nabla b,\\
-\nu\Delta b+u\cdot \nabla b=b\cdot\nabla u,\\
{\rm div }~ u=0,\quad {\rm div }~ b=0,
\end{array}\right.
\end{equation}

\noi
where $u: \R^2 \to \R^2$ and $b: \R^2 \to \R^2$ denote the velocity filed and the magnetic field respectively;  $\mu>0$ is the viscosity coefficient and $\nu>0$ is the resistivity coefficient.
Magnetohydrodynamics is the study of the magnetic properties of electrically conducting fluids, including
 plasmas, liquid metals, etc;  for the physical background
and mathematical theory we refer to Schnack \cite{Sch} and the references therein.
We write the Dirichlet energy as:
\begin{align}
\label{Dirichlet}
D(u,b)=\int_{\R^2}|\nabla u|^2+|\nabla b|^2 \,dx,
\end{align}

\noi
which plays an important role in the Liouville theory concerning MHD equations \eqref{eq:MHD}.

When $b=0$ and $\mu =1$, the MHD equation \eqref{eq:MHD} reduces to the standard 2D Navier-Stokes (NS) equations,
\begin{equation}\label{eq:NS}
\left\{\begin{array}{llll}
-\Delta u+u\cdot \nabla u+\nabla \pi=0,\\
{\rm div }~ u=0,
\end{array}\right.
\end{equation}

\noi
for which Liouville properties are well understood.
For instance,
Gilbarg-Weinberger \cite{GW1978} proved that there are only constant solutions to \eqref{eq:NS}
provided the Dirichlet energy is finite, that is
\[
\int_{\R^2} |\nabla u|^2  \,dx < \infty.
\]
Their proof relies on the fact that the vorticity of the 2D NS equations \eqref{eq:NS} satisfies a nice elliptic equation,
to which the maximum principle applies.
To be more precise, for a solution $u$ to \eqref{eq:NS}, define $w = \partial_2 u_1 - \partial_1 u_2$ to be its vorticity.
Then $w$ solves the following elliptic equation
\[
\Delta w - u \cdot \nabla w = 0,
\]
which satisfies the maximal principle.
The assumption on boundedness of the Dirichlet energy can be relaxed to $\nabla u\in L^p(\mathbb{R}^2)$ with some $p\in (\frac65,2]$, see \cite{BFZ2013}.
As a different type of Liouville property for the 2D NS,
Koch-Nadirashvili-Seregin-Sverak \cite{KNSS} showed
that any bounded solution to \eqref{eq:NS} is trivial solution, say $u \equiv C$,
as a byproduct of their results on the non-stationary case.
In \cite{KNSS} they exploited the maximum principle of a parabolic type, see also a note of Koch \cite{Koch}.
Recently, it was extended to the
case of generalized Newtonian fluids,
where the viscosity is a function depending on the shear
rate in \cite{Fu2012Liou,Fuguo2012}. See also \cite{ZG2015} for a similar result for $u\in L^p(\mathbb{R}^2)$ with $p>1$ on the generalized Newtonian fluid.
Other types of Liouville properties for the stationary Navier-Stokes equation on the plane were also extensively studied,
such as under the growth condition $\limsup |x|^{-\alpha}|u(x)|<\infty$ as
$|x|\rightarrow\infty$ for some $\alpha >0$, see \cite{FZ2011,BFZ2013};
existence and asymptotic behavior of solutions in an exterior domain, see \cite{GNP1997,Ru2009,Ru2010,GG2011,PR2012,KPR2014,DI2017}.
For more references on Liouville theorems of (\ref{eq:NS}), we refer to \cite{Galdi,Fu2012exist,ZG2013,JK2014,Fu2018} and the references therein.


\vspace{2mm}

Before proceed with our man result, we define the weak solution to the MHD system (\ref{eq:MHD}).

\begin{defn}
We say that $(u,b)$ is a weak solution to the 2D MHD equations (\ref{eq:MHD})
in a domain $\Omega\subset \mathbb{R}^2$ provided that:

\vspace{2mm}

\begin{itemize}

\item[(i).] $u,b\in L_{loc}^s(\Omega)$ for some $s\geq 2$;\\
\item[(ii).] $\textup{div}\, u=0$ and $\textup{div}\, b=0$, in the weak sense;\\
\item[(iii).] $(u,b)$ satisfies the following system

\beno
\mu\int_{\Omega}u\cdot \triangle\phi \,dx+\int_{\Omega}(u\cdot \nabla\phi)\cdot u \,dx=\int_{\Omega}(b\cdot \nabla\phi)\cdot b \,dx
\eeno
and
\beno
\nu\int_{\Omega}b\cdot \triangle\phi \,dx+\int_{\Omega}(u\cdot \nabla\phi)\cdot b \,dx=\int_{\Omega}(b\cdot \nabla\phi)\cdot u \,dx
\eeno
for all $\phi\in C_0^{\infty}(\Omega)$ with $\phi=(\phi_1,\phi_2)$ and $\textup{div}\,\phi=0.$

\end{itemize}
In what follows, we shall take $\Omega = \R^2$ unless otherwise specified.

\end{defn}

A natural question is whether the above mentioned Liouville properties
hold for the 2D stationary MHD equations \eqref{eq:MHD}.
One may try to modify the arguments in \cite{GW1978}
or \cite{KNSS} for Navier Stokes equations \eqref{eq:NS} to that of
MHD \eqref{eq:MHD}.
However, this is not the case.
For instance, due to the presence of the magnetic fields,
the maximum principle doesn't hold for the vorticity of the MHD equations.
Therefore,
 Gilbarg-Weinberger's argument fails to apply to the 2D MHD equation.
Nevertheless, we step forward in this direction and provide
 positive answers to this question by assuming smallness of the magnetic fields.

Our first main result is as follows,

\begin{thm}\label{thm1}
Let $(u,b)$ be a weak solution of the 2D MHD equations (\ref{eq:MHD}) defined over the entire plane.
Assume that $D(u,b)\leq D_0<\infty$ and
\beno
\|b\|_{L^1(\mathbb{R}^2)}D_0^{\frac12}\leq C_*\min\{\mu\nu, \mu^{\frac12}\nu^{\frac32}\},
\eeno
where $C_*$ is an absolute constant.
Then $u$ and $b$ are constants.
\end{thm}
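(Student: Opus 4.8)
The plan is to adapt the Gilbarg–Weinberger framework by working with the *vorticity pair* rather than a single scalar vorticity. Set $w=\partial_2 u_1-\partial_1 u_2$ and $j=\partial_2 b_1-\partial_1 b_2$ (the current density). Taking the curl of the first two equations of \eqref{eq:MHD} and using $\mathrm{div}\,u=\mathrm{div}\,b=0$, one obtains a coupled system of the schematic form
\begin{align*}
\mu\Delta w - u\cdot\nabla w &= b\cdot\nabla j + (\text{lower-order terms involving }\nabla b),\\
\nu\Delta j - u\cdot\nabla j &= b\cdot\nabla w + (\text{lower-order terms involving }\nabla u,\nabla b).
\end{align*}
The obstruction, as the authors note, is precisely the coupling $b\cdot\nabla j$ and $b\cdot\nabla w$ together with the extra gradient-product terms: the maximum principle no longer applies to $w$ alone. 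The idea is that smallness of $\|b\|_{L^1}$ together with finiteness of $D(u,b)$ lets us treat these terms perturbatively and still close an argument forcing $\nabla u\equiv 0$ and $\nabla b\equiv 0$.

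**Key steps.** First I would establish, from $D(u,b)\le D_0$, quantitative decay/regularity: by standard elliptic theory the weak solution is smooth, and using the two-dimensional Sobolev and interpolation inequalities one controls $\|u\|_{L^q}$, $\|b\|_{L^q}$ on annuli and shows that the $L^2$ mass of $(\nabla u,\nabla b)$ on dyadic annuli $\{R\le|x|\le 2R\}$ tends to $0$ as $R\to\infty$ (since $D(u,b)<\infty$). Second, I would run an energy estimate on the vorticity system: multiply the $w$-equation by $w$ and the $j$-equation by $j$, integrate over $B_R$ against a cutoff $\varphi_R$, and add. The transport terms $\int u\cdot\nabla w\,w\,\varphi_R$ integrate by parts against $\varphi_R$ only, so they are controlled by the annulus decay; the dangerous cross terms $\int b\cdot\nabla j\,w$ and $\int b\cdot\nabla w\,j$ combine after integration by parts into $-\int \mathrm{div}(b\,wj)\varphi_R = -\int b\cdot\nabla\varphi_R\, wj$ plus a commutator, and the genuinely problematic remainder is bounded by $\|b\|_{L^\infty}$ or, via a Hölder/Gagliardo–Nirenberg chain, by $\|b\|_{L^1}$ times norms of $\nabla w,\nabla j$ that are absorbed by the good terms $\mu\|\nabla w\|_{L^2}^2+\nu\|\nabla j\|_{L^2}^2$ precisely when $\|b\|_{L^1}D_0^{1/2}\le C_*\min\{\mu\nu,\mu^{1/2}\nu^{3/2}\}$ — this is where the stated smallness threshold and the two competing quantities $\mu\nu$ and $\mu^{1/2}\nu^{3/2}$ appear, coming from two different absorption schemes depending on whether one pairs $b\cdot\nabla j$ with the $\mu$-term or the $\nu$-term. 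Third, letting $R\to\infty$ and using the annulus decay to kill all boundary contributions, I would conclude $\mu\|\nabla w\|_{L^2}^2+\nu\|\nabla j\|_{L^2}^2 \le 0$ after absorption, hence $\nabla w=\nabla j=0$; combined with finite Dirichlet energy this forces $w\equiv 0$ and $j\equiv 0$, and then $\nabla u$, $\nabla b$ are harmonic with finite $L^2$ norm, so they vanish.

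**Main obstacle.** The crux is the bookkeeping of the cross terms: showing that after all integrations by parts the only surviving uncontrolled term is genuinely of the form $\|b\|_{L^1}$ (not $\|b\|_{L^2}$ or $\|b\|_{L^\infty}$) multiplied by quantities absorbable into $\mu\|\nabla w\|_{L^2}^2+\nu\|\nabla j\|_{L^2}^2$. This likely requires a careful use of the two-dimensional Ladyzhenskaya inequality $\|f\|_{L^4}^2\lesssim\|f\|_{L^2}\|\nabla f\|_{L^2}$ applied to $w$ and $j$, together with $D(u,b)\le D_0$ to bound $\|w\|_{L^2},\|j\|_{L^2}$, so that a term like $\int|b||\nabla w||j|$ is estimated by $\|b\|_{L^1}\|\nabla w\|_{L^4}\|j\|_{L^4}$ — but this needs $L^4$ control, not $L^1$ of $b$, so in fact one wants $\int|b||\nabla w||j| \le \|b\|_{L^1}\|\nabla w\|_{L^\infty}\|j\|_{L^\infty}$ replaced by an interpolation that trades $L^1$ of $b$ against higher integrability of the vorticities; making this rigorous, and verifying that the constant that emerges is exactly $\min\{\mu\nu,\mu^{1/2}\nu^{3/2}\}$ up to an absolute factor, is the technical heart of the proof. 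A secondary subtlety is justifying the smoothness and the passage to pointwise identities starting only from the weak formulation with $u,b\in L^s_{loc}$, $s\ge 2$, which should follow from local elliptic regularity bootstrapping once the nonlinear terms are seen to lie in $L^1_{loc}$.
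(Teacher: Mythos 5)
Your overall skeleton is the same as the paper's: pass to the vorticity--current system for $(w,h)$ with $h=\partial_2b_1-\partial_1b_2$, test with a cutoff, use the cancellation of the cross terms $\int b\cdot\nabla h\,w\,\phi^2+\int b\cdot\nabla w\,h\,\phi^2=-\int b\cdot\nabla(\phi^2)\,hw$, control the boundary contributions by the Gilbarg--Weinberger annulus decay, and absorb the remaining coupling term using the smallness of $\|b\|_{L^1}D_0^{1/2}$. However, there is a genuine gap precisely at the step you yourself flag as ``the technical heart'': you never explain how a term involving the lower-order source $H\sim\nabla u\,\nabla b$ (or the products $b\,h\,\nabla^2u$ arising after integration by parts) can be estimated by $\|b\|_{L^1}$ times quantities that are \emph{finite} and absorbable into $\mu\|\nabla w\|_{L^2}^2+\nu\|\nabla h\|_{L^2}^2$. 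The missing ingredient is the paper's Lemma \ref{lem:D2u}: an a priori Caccioppoli-type estimate showing $\int_{\R^2}|\nabla w|^2+|\nabla h|^2<\infty$ (equivalently $\nabla^2u,\nabla^2b\in L^2(\R^2)$) under the sole hypothesis $D(u,b)<\infty$, proved by a cutoff iteration and Giaquinta's lemma \emph{before} any smallness is invoked. This bound is indispensable for two reasons. First, the absorption step ``$X\le \tfrac12 X + C$ implies $X\le 2C$'' is vacuous unless one already knows $X=\|\nabla w\|_{L^2}^2+\|\nabla h\|_{L^2}^2<\infty$. Second, the only way $\|b\|_{L^1}$ enters is through the two-dimensional Gagliardo--Nirenberg inequality $\|b\|_{L^4}\le C\|b\|_{L^1}^{1/2}\|\nabla^2b\|_{L^2}^{1/2}$, applied after integrating by parts in $\int H\,h\,\eta_R^2$ to produce factors like $\|bh\|_{L^2}\|\nabla^2u\|_{L^2}\le\|b\|_{L^4}\|h\|_{L^4}\|\nabla^2u\|_{L^2}$; interpolating $\|h\|_{L^4}\le C\|h\|_{L^2}^{1/2}\|\nabla h\|_{L^2}^{1/2}$ and using $\|h\|_{L^2}^2\le D_0$ then yields exactly the scale-invariant factor $(\|b\|_{L^1}D_0^{1/2})^{1/2}$ multiplying quadratic expressions in $\|\nabla w\|_{L^2},\|\nabla h\|_{L^2}$. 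Your alternative suggestion $\int|b||\nabla w||j|\le\|b\|_{L^1}\|\nabla w\|_{L^\infty}\|j\|_{L^\infty}$ does not close, since no $L^\infty$ control of the vorticities is available.

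A secondary, fixable imprecision: the decay you invoke to kill the boundary terms must be the \emph{uniform pointwise} decay $|u(r,\theta)|^2+|b(r,\theta)|^2=o(\ln r)$, which again requires the second-derivative bound (via $\nabla u\in L^2\cap L^4$ and Lemma \ref{unidec}); the mere statement that the Dirichlet mass on dyadic annuli tends to zero is not enough to handle the term $\frac1R\int_{B_{2R}\setminus B_R}|u|(|w|^2+|h|^2)$. So the correct logical order is: establish the global $H^2$-type bound first, deduce the uniform decay, and only then run the energy identity with the smallness hypothesis. With Lemma \ref{lem:D2u} supplied, the rest of your argument goes through essentially as in the paper.
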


\begin{rem}\rm
\label{rmk-smooth}
Similar analysis as Galdi in \cite{Galdi},
for any  weak solutions $(u,b)$ to the stationary MHD equations \eqref{eq:MHD},
if $u,b\in L^s_{loc}(\Omega; \R^2)$ with some $s>2$,
then $u,b\in W^{1,2}_{loc}(\Omega; \R^2)$ and $u,b$ are smooth as a consequence of the regularity property of Stokes equations.
For more details, we refer readers to { \cite[Chapter IX]{Galdi}.}
Therefore, the weak solutions to \eqref{eq:MHD} are indeed smooth under the conditions of Theorem \ref{thm1}.
\end{rem}

\begin{rem}\rm
We stress that smallness conditions only apply to the magnetic field $b$.  Note that if $(u,b)$ be a solution of (\ref{eq:MHD}), then
\beno
u^\lambda(x)\doteq\lambda u(\lambda x),\quad b^\lambda(x)\doteq\lambda b(\lambda x)
\eeno
is also a solution of (\ref{eq:MHD}). The quantities $\|b\|_{L^1(\mathbb{R}^2)}\|\nabla u\|_{L^2(\mathbb{R}^2)}$ or 
$\|b\|_{L^1(\mathbb{R}^2)}\|\nabla b\|_{L^2(\mathbb{R}^2)}$ are invariant under the natural scaling.
By the way, our proof doesn't appeal to the special structure of the vorticity equation
of the 2D NS equations as \cite{GW1978} did,
and so it is more robust  in extending to more general settings.
\end{rem}

Motivated by  \cite{KNSS} and \cite{ZG2015}, our second result is concerned with the Liouville property for $L^p$ solutions,

\begin{thm}\label{thm2}
Let $(u,b)$ be a weak solution of the 2D MHD equations (\ref{eq:MHD}) defined over the entire plane. Then $u$, $b$ are constants if one of the following conditions hold:
\begin{enumerate}
\item $u,b\in L^p(\mathbb{R}^2,\mathbb{R}^2)$ for some $p\in(2, 6]$;
\item $\|u\|_{L^p(\mathbb{R}^2)}+\|b\|_{L^p(\mathbb{R}^2)}\leq L<\infty$ for some $p\in(6, \infty]$, and
there exists an absolute constant $C_*$ such that $
\|b\|_{L^1(\R^2)}L^{\frac{p}{p-2}}\leq C_*\min\{\mu\nu, \mu^{\frac12}\nu^{\frac32}\}$.
\end{enumerate}
\end{thm}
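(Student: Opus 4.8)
My plan is to reduce both cases to the Dirichlet-energy setting of Theorem \ref{thm1}. The key intermediate step is a Caccioppoli-type inequality: testing the two weak formulations against a localized version of the solution itself and exploiting the structure of the nonlinearities in 2D. The plan is to introduce a cutoff $\varphi_R\in C_0^\infty$ with $\varphi_R\equiv 1$ on $B_R$, $\mathrm{supp}\,\varphi_R\subset B_{2R}$, and $|\nabla\varphi_R|\lesssim 1/R$. Using $\phi = \varphi_R^2 u$ and $\phi = \varphi_R^2 b$ (after subtracting a suitable constant to handle the pressure, which can be controlled via the div-free condition and a Bogovskii-type correction so that $\phi$ is genuinely divergence-free) produces, after integration by parts,
\begin{align*}
\mu\int \varphi_R^2|\nabla u|^2 + \nu\int\varphi_R^2|\nabla b|^2
\lesssim \frac{1}{R^2}\int_{B_{2R}}\big(|u|^2+|b|^2\big)
+ \text{(cubic terms)},
\end{align*}
where the cubic terms are of the form $\int (u\otimes u):\nabla(\varphi_R^2 u)$, $\int(b\otimes b):\nabla(\varphi_R^2 u)$, etc. In 2D these cubic terms are the delicate part and are handled by interpolation: $\|u\|_{L^3(B_{2R})}\lesssim \|u\|_{L^2(B_{2R})}^{1/2}\|\nabla u\|_{L^2(B_{2R})}^{1/2} + R^{-?}\|u\|_{L^2}$, so the cubic terms absorb into the left side at the cost of lower-order $L^2$ contributions over the annulus.

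For case (1), with $u,b\in L^p(\R^2)$, $p\in(2,6]$, Hölder on $B_{2R}$ gives $\|u\|_{L^2(B_{2R})}\lesssim R^{2/2-2/p}\|u\|_{L^p}$ and similarly $\|u\|_{L^3(B_{2R})}\lesssim R^{2/3-2/p}\|u\|_{L^p}$; since $p\leq 6$ ensures $2/3-2/p\leq 1/3$, the relevant powers of $R$ in the error terms are non-positive (the $R^{-2}\cdot R^{2-4/p}$ term and the cubic $R^{-1}\cdot R^{1-6/p}=R^{-6/p}$ term both vanish as $R\to\infty$). Letting $R\to\infty$ forces $D(u,b)=0$, so $u,b$ are constant; the constant is then zero only insofar as integrability demands, but constancy is all that is claimed. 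Actually here the smallness is automatic: since $D(u,b)=0$ we conclude directly without invoking Theorem \ref{thm1}, so no hypothesis on $b$ is needed in this range — consistent with the statement.

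For case (2), $p\in(6,\infty]$, the same cutoff argument now yields $\|u\|_{L^2(B_{2R})}^2\lesssim R^{2-4/p}L^2$ and the cubic terms produce a contribution $\lesssim R^{-1}\cdot R^{3-6/p}L^3 = R^{2-6/p}L^3$, which does \emph{not} decay. So instead I would not send $R\to\infty$ naively; rather, I would first prove the Dirichlet energy is finite. One route: the Caccioppoli estimate gives $\int_{B_R}|\nabla u|^2+|\nabla b|^2 \lesssim R^{2-4/p}L^2 + R^{2-6/p}L^3 + (\text{smallness}\cdot\text{energy})$; combined with the global $L^p$ bound and the smallness of $\|b\|_{L^1}L^{p/(p-2)}$, a bootstrap/interpolation closes to give $D(u,b)<\infty$ with $D(u,b)^{1/2}\lesssim L^{p/(p-2)}$ (the exponent matched to make $\|b\|_{L^1}D^{1/2}$ comparable to $\|b\|_{L^1}L^{p/(p-2)}$, hence small). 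Then Theorem \ref{thm1} applies verbatim and gives that $u,b$ are constants.

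The main obstacle I anticipate is the pressure term and making the test functions exactly divergence-free: one must write $\varphi_R^2 u = \psi_R + \nabla\text{-correction}$ with the correction supplied by the Bogovskii operator on the annulus $B_{2R}\setminus B_R$, and track its $L^2$ and $H^1$ norms (which scale like those of $u\cdot\nabla\varphi_R^2$, i.e. like $R^{-1}\|u\|_{L^2(\text{annulus})}$) so that it does not spoil the absorption. The second delicate point is the interpolation bookkeeping in case (2): getting the exponent $p/(p-2)$ to come out precisely requires choosing the split between "absorbed" and "error" cubic terms optimally, and verifying that the threshold constant $C_*$ from Theorem \ref{thm1} survives the chain of inequalities. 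Everything else — the Gagliardo–Nirenberg inequalities, Hölder, and the limit $R\to\infty$ — is routine.
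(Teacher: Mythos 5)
Your case (1) is essentially the paper's argument: a Caccioppoli inequality obtained by testing with a cut-off multiple of the solution, a Bogovskii-type correction on an annulus to keep the test function divergence-free (so the pressure drops out), H\"older/interpolation on the annulus, and $R\to\infty$. The paper runs this through Giaquinta's iteration lemma and splits the range into $3\le p\le 6$ and $2\le p<4$ so that the cubic terms can be handled either directly by H\"older or by Ladyzhenskaya interpolation when $p<3$; you anticipated both ingredients. Two small slips: $\|u\|_{L^3(B_{2R})}^3\lesssim R^{2-6/p}\|u\|_{L^p}^3$, so the cubic error is $R^{1-6/p}$ rather than $R^{-6/p}$, and at the endpoint $p=6$ this is $O(1)$, so you must measure the $L^p$ norm over the annulus $B_R\setminus B_{R/2}$ (which tends to zero since $u\in L^p(\R^2)$) rather than over all of $B_{2R}$. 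These are repairable.

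The genuine gap is in case (2). Your plan is to bootstrap the Caccioppoli estimate into $D(u,b)<\infty$ and then apply Theorem \ref{thm1} verbatim, but no mechanism for this bootstrap is given and the available estimates do not support one: for $p>6$ the cubic term contributes $CR^{1-6/p}L^3$ (and $CR$ when $p=\infty$), so the best first-order bound is $\int_{B_R}|\nabla u|^2+|\nabla b|^2\lesssim 1+R^{1-6/p}$, which grows with $R$. The smallness of $\|b\|_{L^1}L^{p/(p-2)}$ cannot rescue this, because the obstructing term $\int u\cdot\nabla(\varphi_R^2)\,|u|^2\,dx$ involves only $u$. The paper therefore never proves $D(u,b)<\infty$ and never invokes Theorem \ref{thm1} in this range; instead it moves up one derivative. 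Concretely: from the growth estimate it derives a uniform local $L^2$ bound on $\nabla^2u,\nabla^2b$ via energy estimates for the vorticity--current system $(w,h)$, deduces $u,b\in L^\infty$ by Gagliardo--Nirenberg, then proves a \emph{global} bound $\int_{\R^2}|\nabla^2u|^2+|\nabla^2b|^2<\infty$ --- this is where the smallness of $\|b\|_{L^1}$ enters, to absorb the term $\int H h\phi^2\,dx$ coming from the stretching term $H$ --- and finally shows $\nabla^2u\equiv\nabla^2b\equiv0$, so $u,b$ are affine and hence constant by the $L^p$ bound. The point is that the globally finite quantity is the second-order energy, not the Dirichlet energy. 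To complete your proof you would need either to supply the missing bootstrap (which appears not to exist) or to adopt this second-order strategy.
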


\begin{rem}\rm
The condition $p>2$ is to ensure the regularity of weak solution of (\ref{eq:MHD}). When $p\in (2,6]$, no smallness conditions are needed, that is to say there are no non-trivial $L^p$ solutions to \eqref{eq:MHD}. However, it is different when $p>6$. The main difference comes from a simple fact: the estimate of the nonlinear term $u\cdot\nabla u$ or $b\cdot\nabla b$, and $R^{-1}\int_{B_R\setminus B_{R/2}}|b|^3dx=o(R)$ as $R\rightarrow\infty$ if $b\in L^p(\R^2)$ satisfying $p\leq 6$(see Section 4.1,4.2).  When $p\in (6,\infty]$, we need to assume that the scaling invariant norms  $\|b\|_{L^1(\R^2)}\|u\|_{L^p(\R^2)}^{\frac{p}{p-2}}$ or $\|b\|_{L^1(\R^2)}\|b\|_{L^p(\R^2)}^{\frac{p}{p-2}}$ are sufficiently small. Moreover,
the above result generalizes the corresponding theorems for the Navier Stokes equation (\ref{eq:NS}) in \cite{KNSS} or \cite{ZG2015} to the setting of MHD equations.
\end{rem}

%

\section{Preliminaries}

In this section, we prepare some preliminary lemmas that we shall rely on.
Throughout this article, $C(a_1,\cdots,a_k)$ denotes a constant depending on $a_1,\cdots, a_k$, which may be different from line to line.
We denote the ball with centre $x_0$ of radius $R$ by $B_R(x_0)$.
If $x_0=0$, we simply write $B_R = B_R(0)$.
Let a radially decaying smooth $\eta(x)$ be a test function such that
\begin{align*} \eta(x)=\left\{
\begin{aligned}
&1,\quad x\in B_1,\\
&0, \quad x\in B_2^c.
\end{aligned}
\right. \end{align*}
and
let
\begin{align}
\label{etaR}
\eta_R (x) = \eta \left(\frac{x}{R} \right)
\end{align}
for $R>0$.
One notices that $|\nabla^k \eta_R| \le \frac{C}{R^k}$.

%
%
%
%
%
%

Let us recall a result of Gilbarg-Weinberger in \cite{GW1978} about the decay of functions with finite Dirichlet integrals.
\begin{lem}[Lemma 2.1, 2.2, \cite{GW1978}]
\label{lem:GW}
Let a $C^1$ vector-valued function $f(x)=(f_1,f_2)(x)=f(r,\theta)$ with $r=|x|$ and $x_1=r\cos\theta$.  There holds finite Dirichlet integral in the range $r>r_0$, that is
\beno
\int_{r>r_0}|\nabla f|^2\,dxdy<\infty.
\eeno
Then, we have
\beno
\lim_{r\rightarrow\infty} \frac{1}{\ln r}\int_0^{2\pi}|f(r,\theta)|^2d\theta=0.
\eeno
and furthermore,
there is an increasing sequence $\{r_n\}$ with $r_n\in (2^n, 2^{n+1})$, such that
\beno
\lim_{n\rightarrow \infty}\frac{|f(r_n,\theta)|^2}{\ln r_n}=0,
\eeno
uniformly in $\theta.$
\end{lem}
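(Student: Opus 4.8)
The plan is to pass to polar coordinates and exploit the one-dimensional Poincaré-type inequality on circles together with the radial integrability coming from the finite Dirichlet integral. First I would set $\phi(r) = \int_0^{2\pi} |f(r,\theta)|^2\,d\theta$ and compute $\phi'(r) = 2\int_0^{2\pi} f\cdot \partial_r f\,d\theta$, so that $|\phi'(r)| \le 2\phi(r)^{1/2}\big(\int_0^{2\pi}|\partial_r f|^2\,d\theta\big)^{1/2}$ by Cauchy-Schwarz. Writing $g(r) = \int_0^{2\pi}(|\partial_r f|^2 + r^{-2}|\partial_\theta f|^2)\,d\theta$, the finite Dirichlet integral hypothesis says precisely $\int_{r_0}^\infty g(r)\, r\,dr < \infty$. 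Combining, $|\phi'(r)| \le 2\phi(r)^{1/2} g(r)^{1/2}$, hence $\big|\tfrac{d}{dr}\phi(r)^{1/2}\big| \le g(r)^{1/2}$ wherever $\phi>0$, and integrating from $r_0$ to $r$ and using Cauchy-Schwarz in the radial variable gives $\phi(r)^{1/2} \le \phi(r_0)^{1/2} + \int_{r_0}^r g(s)^{1/2}\,ds \le \phi(r_0)^{1/2} + \big(\int_{r_0}^r g(s)\,s\,ds\big)^{1/2}\big(\int_{r_0}^r s^{-1}\,ds\big)^{1/2}$. The last radial factor is $(\ln(r/r_0))^{1/2}$, and $\int_{r_0}^\infty g(s)\,s\,ds =: E < \infty$, so $\phi(r) \le 2\phi(r_0) + 2E\ln(r/r_0)$; this already yields $\phi(r) = O(\ln r)$. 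To upgrade to $\phi(r)/\ln r \to 0$, I would instead split the integral: fix $\epsilon>0$, choose $\rho$ large enough that $\int_\rho^\infty g(s)\,s\,ds < \epsilon$, and run the same estimate starting from $\rho$ rather than $r_0$, getting $\phi(r)^{1/2} \le \phi(\rho)^{1/2} + (\epsilon \ln(r/\rho))^{1/2}$ for $r > \rho$; dividing by $\ln r$ and letting $r\to\infty$ gives $\limsup \phi(r)/\ln r \le \epsilon$, and since $\epsilon$ is arbitrary the first claim follows.

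For the pointwise statement, the idea is to control $|f(r,\theta)|^2$ for a single $\theta$ by its average $\phi(r)/2\pi$ plus an oscillation term governed by $\partial_\theta f$. Indeed, for any $\theta_1,\theta_2$, $|f(r,\theta_1)|^2 - |f(r,\theta_2)|^2 = \int_{\theta_2}^{\theta_1} \partial_\theta |f(r,\theta)|^2\,d\theta$, so $\big||f(r,\theta_1)|^2 - |f(r,\theta_2)|^2\big| \le 2\int_0^{2\pi} |f||\partial_\theta f|\,d\theta \le 2\phi(r)^{1/2}\big(\int_0^{2\pi}|\partial_\theta f|^2\,d\theta\big)^{1/2}$; choosing $\theta_2$ to realize the average, $|f(r,\theta)|^2 \le \tfrac1{2\pi}\phi(r) + 2\phi(r)^{1/2} h(r)^{1/2}$ uniformly in $\theta$, where $h(r) = \int_0^{2\pi}|\partial_\theta f|^2\,d\theta$. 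Since $r^{-2}h(r) \le g(r)$ and $\int_{r_0}^\infty g(s)\,s\,ds < \infty$, the quantity $\int_{r_0}^\infty r^{-1} h(r)\,dr = \int_{r_0}^\infty r\cdot r^{-2}h(r)\,dr < \infty$; hence $\sum_n \int_{2^n}^{2^{n+1}} r^{-1} h(r)\,dr < \infty$ with each summand bounded below by $(\ln 2)\cdot\min_{[2^n,2^{n+1}]}(\cdot)$-type quantities, so by the divergence of $\sum_n \ln 2$ one can select $r_n \in (2^n, 2^{n+1})$ with $h(r_n) = o(1)$ — more carefully, picking $r_n$ to minimize $h$ on the dyadic annulus forces $h(r_n)\ln 2 \le \int_{2^n}^{2^{n+1}} r^{-1}h(r)\,dr \to 0$. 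Combined with $\phi(r_n) = O(\ln r_n)$, the bound $|f(r_n,\theta)|^2 \le \tfrac1{2\pi}\phi(r_n) + 2\phi(r_n)^{1/2} h(r_n)^{1/2}$ gives $|f(r_n,\theta)|^2/\ln r_n \le \tfrac1{2\pi}\phi(r_n)/\ln r_n + C\,(\phi(r_n)/\ln r_n)^{1/2} h(r_n)^{1/2} \to 0$ uniformly in $\theta$, using $\phi(r_n)/\ln r_n \to 0$ from the first part.

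The main obstacle, and the only place where care is genuinely needed, is the upgrade from $O(\ln r)$ to $o(\ln r)$ for $\phi$, and the corresponding selection of the good radii $r_n$: both rely on the tail $\int_\rho^\infty g(s)\,s\,ds \to 0$ as $\rho \to \infty$ and on quantifying how a nonnegative function whose weighted integral $\int r^{-1} h(r)\,dr$ converges must be small along a dyadic subsequence. Everything else is Cauchy-Schwarz in one variable applied twice (once in $\theta$, once in $r$). Since this is quoted verbatim as Lemmas 2.1 and 2.2 of Gilbarg-Weinberger, I would in practice simply cite \cite{GW1978}, but the self-contained argument above is short enough to include.
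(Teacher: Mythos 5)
Your argument is correct and is essentially the classical Gilbarg--Weinberger proof of their Lemmas 2.1 and 2.2 (the paper itself offers no proof, only the citation to \cite{GW1978}): Cauchy--Schwarz in $\theta$ to get $\bigl|\frac{d}{dr}\phi^{1/2}\bigr|\le g^{1/2}$, Cauchy--Schwarz in $r$ against the weight $s^{-1}$ to get the $O(\ln r)$ bound, the tail-splitting to upgrade to $o(\ln r)$, and the oscillation estimate plus the convergence of $\sum_n\int_{2^n}^{2^{n+1}}r^{-1}h(r)\,dr$ to select the good radii. The only cosmetic point is that the minimizer of $h$ on $[2^n,2^{n+1}]$ could sit at an endpoint while the lemma asks for $r_n$ in the open interval, but choosing $r_n$ with $h(r_n)\le\frac{2}{\ln 2}\int_{2^n}^{2^{n+1}}r^{-1}h\,dr$ fixes this trivially.
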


\noi
If, furthermore, we assume $\nabla f \in L^p(\R^2)$ for some $2< p < \infty$,
then the above decay property can be improved to be point-wise uniformly.
More precisely, we have

\begin{lem}[Theorem II.9.1 \cite{Galdi}]
\label{unidec}
Let $\Omega \subset \R^2$ be an exterior domain and let
\[
\nabla f \in L^2 \cap L^p (\Omega),
\]
for some $2< p < \infty$.
Then
\[
\lim_{|x| \to \infty} \frac{|f(x)|}{\sqrt{ \ln (|x|)}} = 0,
\]
uniformly.
\end{lem}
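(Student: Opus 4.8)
The plan is to upgrade the $L^2$–averaged decay furnished by Lemma~\ref{lem:GW} to a genuinely pointwise one, using the extra integrability $\nabla f\in L^p$ with $p>2$. Since $\nabla f\in L^2(\Omega)$, Lemma~\ref{lem:GW} gives: for every $\varepsilon>0$ there is $R_\varepsilon$ with
\[
\int_0^{2\pi}|f(r,\theta)|^2\,d\theta \le \varepsilon\ln r,\qquad r\ge R_\varepsilon,
\]
so on every circle $\{|x|=r\}$, $r\ge R_\varepsilon$, there is a point $x_r$ with $|f(x_r)|\le(\varepsilon\ln r/2\pi)^{1/2}$. It therefore suffices to bound the oscillation of $f$ over the dyadic annulus $\Sigma_r:=\{r/2<|y|<2r\}$, which contains $x_r$ and any prescribed point of $\{|x|=r\}$, and to show it is $o(\sqrt{\ln r})$.

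For this I would invoke $\nabla f\in L^p(\Omega)$, $p>2$: by Morrey's embedding $f$ is locally $C^{0,1-2/p}$, and rescaling $\Sigma_r$ to the fixed annulus $\{1/2<|y|<2\}$ and applying the Morrey inequality there (routine scaling bookkeeping) yields $\mathrm{osc}_{\Sigma_r}f \le C\,r^{1-2/p}\|\nabla f\|_{L^p(\Sigma_r)}$. Hence, for $|x|=r\ge R_\varepsilon$,
\[
\frac{|f(x)|}{\sqrt{\ln r}} \le \Big(\frac{\varepsilon}{2\pi}\Big)^{1/2} + C\,\frac{r^{1-2/p}\,\|\nabla f\|_{L^p(\Sigma_r)}}{\sqrt{\ln r}},
\]
and one would like to let $r\to\infty$ and then $\varepsilon\to0$.

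The main obstacle is the second term. Since $\nabla f\in L^p(\Omega)$ only gives $\|\nabla f\|_{L^p(\Sigma_r)}\to0$ (with $\sum_n\|\nabla f\|_{L^p(\Sigma_{2^n})}^p<\infty$), while the prefactor $r^{1-2/p}$ grows faster than $\sqrt{\ln r}$ — a reflection of the fact that the $L^2$–gradient control is scaling–critical in $2$D and by itself only delivers BMO/Trudinger–Moser-type bounds — the crude Morrey estimate over a macroscopic annulus is too lossy. The remedy is to estimate the oscillation only where the gradient actually lives: working along the sequence $r_n\in(2^n,2^{n+1})$ of Lemma~\ref{lem:GW}, on whose circles the decay is already pointwise uniform in $\theta$, connect a general point $x$ with $r_n\le|x|<r_{n+1}$ to $\partial B_{r_n}$ by a path built from a circular arc whose radius is chosen, via Fubini in the radius, so that $\int_{\mathrm{arc}}|\nabla f|\,ds$ is controlled by the cheap quantity $\|\nabla f\|_{L^2(\Sigma_r)}\to0$, together with short radial segments handled by the local Morrey estimate at the genuinely smallest admissible scale. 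Equivalently, one can rerun the Gilbarg–Weinberger radial ODE/Gronwall argument from \cite{GW1978}, now tracking $\sup_\theta|f(r,\theta)|$ rather than $\int_0^{2\pi}|f(r,\theta)|^2\,d\theta$, and using $\nabla f\in L^p$ to control the oscillation of $f$ around circles. This localization is the technical heart of the proof and is exactly where the hypothesis $p>2$ is indispensable; once it is in place, $|f(x)|/\sqrt{\ln|x|}\to0$ uniformly follows as above.
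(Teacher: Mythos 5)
The paper does not actually prove this lemma; it quotes it from Galdi's book (Theorem II.9.1), so I am comparing your argument with the standard proof of that result. Your first step (an anchor point on each large circle where $|f|\le(\varepsilon\ln r/2\pi)^{1/2}$, via Lemma \ref{lem:GW}) is fine, and your diagnosis that the rescaled Morrey bound $\mathrm{osc}_{\Sigma_r}f\lesssim r^{1-2/p}\|\nabla f\|_{L^p(\Sigma_r)}$ over a macroscopic annulus is too lossy is correct. But the argument stops exactly at the point you yourself call ``the technical heart'': neither proposed remedy (origin-centered arcs and radial segments selected by Fubini, or a rerun of the Gilbarg--Weinberger argument tracking $\sup_\theta|f|$) is carried out, and I do not see how either closes. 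The radial segment joining a given point $x$, $|x|=r$, to a good circle $r_n$ has length comparable to $r$, and $\int_{r_n}^{r}|\partial_s f(s,\theta_0)|\,ds$ can only be made small for \emph{most} angles $\theta_0$, never for the particular angle of the given $x$; repairing the exceptional angles by an arc at the fixed radius $|x|$ runs into the same obstruction, since Fubini selections produce good circles and good rays but not the specific ones through $x$. So there is a genuine gap.

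The missing device is a potential estimate centered at $x$ rather than at the origin: with $R=|x|/2$ and $p'=p/(p-1)$,
\[
\bigl|f(x)-\bar f_{B(x,R)}\bigr|\le C\int_{B(x,R)}\frac{|\nabla f(y)|}{|x-y|}\,dy .
\]
Splitting at $|x-y|=1$, H\"older's inequality bounds the near part by $C_p\|\nabla f\|_{L^p(\{|y|>|x|/2\})}$, finite precisely because $p>2$ makes $|x-y|^{-p'}$ locally integrable in two dimensions, while Cauchy--Schwarz bounds the far part by $(2\pi\ln R)^{1/2}\|\nabla f\|_{L^2(\{|y|>|x|/2\})}$. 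Both norms are tails of convergent integrals, so the right-hand side is $o(1)+o(1)\sqrt{\ln|x|}$; and $|\bar f_{B(x,R)}|\le\bigl(|B(x,R)|^{-1}\int_{B(x,R)}|f|^2\bigr)^{1/2}=o(\sqrt{\ln|x|})$ by the circle-average decay of Lemma \ref{lem:GW}. This near/far splitting---the local singularity of the kernel paid for by $L^p$ with $p>2$, the logarithm paid for by the vanishing $L^2$ tail---is exactly the interpolation your sketch is reaching for but does not supply; it averages over all directions emanating from $x$ itself, which is what the origin-based polar decomposition cannot do.
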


We also need a Giaquinta's iteration lemma \cite[Lemma 3.1]{G83}, also see a proof in \cite[Lemma 8]{ChYa}.

\begin{lem}[Lemma 3.1 \cite{G83}]
\label{iter}
Let $f(r)$ be a non-negative bounded function on $[R_0,R_1] \subset \R_+$. If there are negative constants $A,B,D$ and positive exponents $b<a$ and a parameter $\theta \in (0,1)$ such that for all $R_0 \le \rho < \tau \le R_1$
\[
f(\rho) \le \theta f(\tau) + \frac{A}{(\tau-\rho)^{a}} + \frac{B}{(\tau-\rho)^{b}} + D,
\]
then for all $R_0 \le \rho < \tau \le R_1$
\[
f(\rho) \leq C(a,\theta) \left[\frac{A}{(\tau-\rho)^{a}} + \frac{B}{(\tau-\rho)^{b}} + D\right].
\]
\end{lem}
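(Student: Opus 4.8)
The plan is to use Giaquinta's standard \emph{hole-filling} iteration. Fix $R_0 \le \rho < \tau \le R_1$ and a parameter $\lambda \in (0,1)$ to be chosen below. Define an increasing sequence of radii interpolating between $\rho$ and $\tau$ by
\[
\rho_0 = \rho, \qquad \rho_{i+1} = \rho_i + (1-\lambda)\lambda^i(\tau-\rho), \quad i \ge 0.
\]
A geometric summation then gives $\tau - \rho_i = \lambda^i(\tau-\rho)$ and $\rho_{i+1}-\rho_i = (1-\lambda)\lambda^i(\tau-\rho)$; in particular $\rho_i \uparrow \tau$ as $i \to \infty$, and every consecutive pair $(\rho_i,\rho_{i+1})$ lies in $[R_0,R_1]$, so the hypothesis applies to it.

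First I would apply the assumed inequality on each pair $\rho = \rho_i$, $\tau = \rho_{i+1}$, which yields
\[
f(\rho_i) \le \theta f(\rho_{i+1}) + \frac{A}{(1-\lambda)^a\lambda^{ia}(\tau-\rho)^a} + \frac{B}{(1-\lambda)^b\lambda^{ib}(\tau-\rho)^b} + D.
\]
Iterating this $n$ times and collecting the telescoping powers of $\theta$ produces
\[
f(\rho) \le \theta^n f(\rho_n) + \sum_{i=0}^{n-1}\theta^i\left[\frac{A}{(1-\lambda)^a\lambda^{ia}(\tau-\rho)^a} + \frac{B}{(1-\lambda)^b\lambda^{ib}(\tau-\rho)^b} + D\right].
\]
The sum then splits into three geometric series with ratios $\theta\lambda^{-a}$, $\theta\lambda^{-b}$, and $\theta$.

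The decisive step is the choice of $\lambda$: I would take any $\lambda$ with $\theta^{1/a} < \lambda < 1$, for instance $\lambda = \theta^{1/(2a)}$, which forces $\theta\lambda^{-a} < 1$. Here the hypothesis $b < a$ is precisely what renders the lower-order term harmless: since $0 < \lambda < 1$ we have $\lambda^{-b} < \lambda^{-a}$, hence $\theta\lambda^{-b} < \theta\lambda^{-a} < 1$ as well, while $\theta < 1$ trivially. Thus all three series converge, with sums bounded by constants depending only on $a$, $b$, and $\theta$ (through $\lambda$). Letting $n \to \infty$ and using that $f$ is bounded on $[R_0,R_1]$, so that $\theta^n f(\rho_n) \to 0$, I obtain
\[
f(\rho) \le C(a,\theta)\left[\frac{A}{(\tau-\rho)^a} + \frac{B}{(\tau-\rho)^b} + D\right],
\]
which is the assertion.

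The only genuine obstacle is securing the convergence of the geometric series, i.e.\ landing $\lambda$ in the admissible window $(\theta^{1/a},1)$; the remainder is bookkeeping of the geometric factors. I also note that the constants $A,B,D$ are to be read as \emph{non-negative} (the wording in the statement notwithstanding), since otherwise both the iterated inequality and its conclusion would be vacuous.
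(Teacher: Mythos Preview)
Your argument is correct and is precisely the standard hole-filling iteration due to Giaquinta. The paper itself does not supply a proof of this lemma; it merely cites \cite{G83} (and \cite{ChYa} for an alternative write-up), and your proof is essentially the one found there. Your closing remark that the constants $A,B,D$ must be read as non-negative rather than negative is also apt: this is a typographical slip in the statement as recorded in the paper.
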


\section{Proof of Theorem \ref{thm1}}


For a solution of $(u,b)$ to \eqref{eq:MHD}, consider the vorticity $w=\partial_2u_1-\partial_1 u_2$ and the current density  $h=\partial_2b_1-\partial_1 b_2$.
It is easy to check that $(w,h)$ satisfies
\begin{equation}\label{eq:MHDv-2D}
\left\{\begin{array}{llll}
-\mu\Delta w+u\cdot \nabla w=b\cdot\nabla h,\\
-\nu\Delta h+u\cdot \nabla h=b\cdot\nabla w+ H
\end{array}\right.
\end{equation}
where
\begin{align}
\label{H}
H=2\partial_2b_2(\partial_2u_1+\partial_1 u_2)+2\partial_1u_1(\partial_2b_1+\partial_1 b_2)
\end{align}

\noi
One crucial step is to get the higher regularity estimates of the solutions of (\ref{eq:MHD}).
Different from the argument in \cite{GW1978},
we have to exploit something new to overcome the obstacle due to the lack of maximum principle for the 2D MHD equations.
Before proceeding with the proof of Theorem \ref{thm1}, we prove the following smoothing property for the solution of \eqref{eq:MHDv-2D}.

\begin{lem}\label{lem:D2u}	
Let the vorticity $w$ and the current $h$ as in the MHD equations (\ref{eq:MHDv-2D}) with finite Dirichlet integral, i.e.
$D(u,b)<\infty$.
Then, we have
\ben\label{D2u}
\int_{\mathbb{R}^2}|\nabla w|^2+|\nabla h|^2\,dx<\infty;
\een
and furthermore, under the polar coordinate $x = r \cos \theta$ and $ y = r\sin \theta$, we have
\ben\label{decayu}
\lim_{r\rightarrow \infty}\frac{|u(r,\theta)|^2}{\ln r}+\frac{|b(r,\theta)|^2}{\ln r}=0
\een
uniformly in $\theta.$
\end{lem}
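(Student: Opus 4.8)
The plan is to prove the global bound \eqref{D2u} first, by a weighted (Caccioppoli–type) energy estimate on the vorticity system \eqref{eq:MHDv-2D} fed into the iteration Lemma~\ref{iter}, and then to deduce the uniform decay \eqref{decayu} from \eqref{D2u} via elliptic regularity and Lemma~\ref{unidec}. The only a priori information used is: $w=\partial_2u_1-\partial_1u_2$ and $h=\partial_2b_1-\partial_1b_2$ lie globally in $L^2$, with $\|w\|_{L^2(\R^2)}^2+\|h\|_{L^2(\R^2)}^2\le 2D(u,b)$; the Gilbarg–Weinberger decay of Lemma~\ref{lem:GW}, which gives $\int_0^{2\pi}(|u|^2+|b|^2)(r,\theta)\,d\theta=o(\ln r)$ and hence $\|u\|_{L^2(B_R)}+\|b\|_{L^2(B_R)}\lesssim R\sqrt{\ln R}$; and the smoothness of $(u,b)$ recorded in Remark~\ref{rmk-smooth}, which in particular makes $f(R):=\int_{B_R}(|\nabla w|^2+|\nabla h|^2)\,dx$ finite — though a priori unbounded — for every $R$.

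For $R\le\rho<\tau\le 2R$ take a cutoff $\phi$ with $\phi\equiv 1$ on $B_\rho$, $\mathrm{supp}\,\phi\subset B_{(\rho+\tau)/2}$ and $|\nabla^k\phi|\lesssim(\tau-\rho)^{-k}$. Testing the first equation of \eqref{eq:MHDv-2D} against $w\phi^2$ and the second against $h\phi^2$, integrating by parts and using $\mathrm{div}\,u=\mathrm{div}\,b=0$, the two would-be dangerous cross terms collapse through the algebraic identity $(b\cdot\nabla h)\,w+(b\cdot\nabla w)\,h=\mathrm{div}(b\,w\,h)$, leaving
\begin{align*}
\mu\int|\nabla w|^2\phi^2+\nu\int|\nabla h|^2\phi^2
&=-2\mu\int(\nabla w\cdot\nabla\phi)w\phi-2\nu\int(\nabla h\cdot\nabla\phi)h\phi\\
&\quad+\int(w^2+h^2)(u\cdot\nabla\phi)\phi-2\int wh\,(b\cdot\nabla\phi)\phi+\int Hh\,\phi^2.
\end{align*}
The first line is handled by Young's inequality, absorbing $\tfrac{\mu}{8}\int|\nabla w|^2\phi^2+\tfrac{\nu}{8}\int|\nabla h|^2\phi^2$ and leaving $\lesssim D(u,b)(\tau-\rho)^{-2}$. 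For the two convective boundary terms one estimates, e.g., $\int(w^2+h^2)|u|\,|\nabla\phi|\lesssim(\tau-\rho)^{-1}\|u\|_{L^4(B_{2R})}\bigl(\|w\|_{L^{8/3}}^2+\|h\|_{L^{8/3}}^2\bigr)$, interpolates $L^{8/3}$ between $L^2$ and $L^4$, applies Ladyzhenskaya's inequality $\|g\|_{L^4}^2\lesssim\|g\|_{L^2}\|\nabla g\|_{L^2}$ to cutoffs of $w$ and $h$, and uses $\|u\|_{L^4(B_{2R})}\lesssim R^{1/2}\sqrt{\ln R}$; the term with $b$ in place of $u$ is identical. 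The forcing term is treated via $|H|\lesssim|\nabla u||\nabla b|$, a Hölder splitting $L^2\times L^4\times L^4$, Ladyzhenskaya again, and the interior bound $\|\nabla^2 b\|_{L^2(\mathrm{supp}\,\phi)}\lesssim\|\nabla h\|_{L^2(B_\tau)}+(\text{lower order})$ coming from $\Delta b=\nabla^\perp h$.

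Collecting all estimates and applying Young's inequality repeatedly, every contribution on the right is bounded either by a small multiple of $f(\tau)$ — plus a small multiple of $\int(|\nabla w|^2+|\nabla h|^2)\phi^2$, absorbed on the left — or by $A(\tau-\rho)^{-2}+B(\tau-\rho)^{-4/3}+D$ with $A\lesssim D(u,b)$, $B\lesssim D(u,b)\,R^{2/3}(\ln R)^{2/3}$, and $D$ a constant depending only on $\mu,\nu,D(u,b)$; the crucial point is that each dangerous term carries $f(\tau)$ only to a fractional power, so the small multiples can be arranged for \emph{any} fixed $\mu,\nu>0$ with no smallness hypothesis (the price being a possibly large $D$). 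This yields $f(\rho)\le\theta f(\tau)+A(\tau-\rho)^{-2}+B(\tau-\rho)^{-4/3}+D$ for all $R\le\rho<\tau\le 2R$ with $\theta\in(0,1)$ absolute, so Lemma~\ref{iter} gives $f(R)\lesssim AR^{-2}+BR^{-4/3}+D\lesssim D(u,b)\,R^{-2/3}(\ln R)^{2/3}+D$; letting $R\to\infty$ proves \eqref{D2u}. Finally, from $\Delta u=\nabla^\perp w$, $\Delta b=\nabla^\perp h$ and the Calderón–Zygmund estimate one gets $\nabla^2u,\nabla^2b\in L^2(\R^2)$, hence $\nabla u,\nabla b\in W^{1,2}(\R^2)\hookrightarrow L^2\cap L^4(\R^2)$, and Lemma~\ref{unidec} (with $p=4$) gives $|u(x)|^2+|b(x)|^2=o(\ln|x|)$ uniformly, which is \eqref{decayu}.

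The main obstacle is the middle step: since the MHD vorticity $w$ no longer obeys a maximum principle, the Gilbarg–Weinberger strategy is unavailable, and the energy estimate one is forced to use has both the convective terms and the forcing $H$ scaling worse than the finite Dirichlet energy alone controls. What makes the argument close is, first, the cancellation of the two $b$-cross terms into a pure divergence, and second, the structural fact that after invoking Ladyzhenskaya's inequality and the Poisson-type identities $\Delta u=\nabla^\perp w$, $\Delta b=\nabla^\perp h$, every borderline contribution appears only to a fractional power of the unknown $f(\tau)$ and with a coefficient growing only sub-linearly in $R$ — so it is absorbed for arbitrary $\mu,\nu>0$ and the leftover coefficients vanish under Giaquinta's iteration at scale $R$.
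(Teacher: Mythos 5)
Your proposal is correct and follows essentially the same route as the paper: a Caccioppoli estimate for \eqref{eq:MHDv-2D} tested against $\phi^2w$, $\phi^2h$, the cancellation of the two $b$-cross terms into $-\int b\cdot\nabla(\phi^2)\,hw$, absorption via Ladyzhenskaya/Young and Giaquinta's iteration Lemma~\ref{iter} at scale $R$, and then $\Delta u=\nabla^{\perp}w$, Gagliardo--Nirenberg and Lemma~\ref{unidec} with $p=4$ for the decay \eqref{decayu}. The only (harmless) deviation is in the convective and cross terms, where you use H\"older in $L^4\times L^{8/3}\times L^{8/3}$ together with the growth bound $\|u\|_{L^4(B_{2R})}\lesssim R^{1/2}(\ln R)^{1/2}$, while the paper splits $b$ into its angular mean plus oscillation and invokes Wirtinger's inequality --- both rest on the Gilbarg--Weinberger decay of Lemma~\ref{lem:GW} and yield terms that Young's inequality absorbs with no smallness condition.
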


\begin{proof}
We assume $\mu=\nu=1$ without loss of generality. Choose a cut-off function $\phi(x)\in C_0^\infty(B_R)$ with $0\leq \phi\leq 1$ satisfying the following two properties:

\begin{enumerate}

\item[i).]
$\phi$ is radially decreasing and satisfies
\begin{align*} \phi(x)=\phi(|x|)=\left\{
\begin{aligned}
&1,\quad |x|\leq \rho,\\
&0, \quad |x|\geq\tau,
\end{aligned}
\right. \end{align*}
where $0<\frac{R}{2}\leq \rho<\tau\leq R$;

\item[ii).]
 $|\nabla\phi|(x)\leq \frac{C}{\tau-\rho}$ for all $x\in \mathbb{R}^2$.

\end{enumerate}



\noi
 Multiplying both sides of
\eqref{eq:MHDv-2D} by $\phi^2 w$ and $\phi^2 h$ respectively and then
integrating over $\R^2$ to get
\[
\int_{\mathbb{R}^2} \phi^2 |\nabla w |^2 \,dx = - \int_{\mathbb{R}^2} \nabla w \cdot \nabla (\phi^2) w \,dx - \int_{\mathbb{R}^2} u \cdot \nabla w \phi^2 w \,dx + \int_{\mathbb{R}^2} b \cdot \nabla h \phi^2 w \,dx
\]
and
\[
\int_{\mathbb{R}^2} \phi^2 |\nabla h|^2 \,dx = - \int_{\mathbb{R}^2} \nabla h \cdot\nabla (\phi^2)  h \,dx - \int_{\mathbb{R}^2}  u \cdot \nabla h \phi^2 h \,dx + \int_{\mathbb{R}^2} b \cdot \nabla w \phi^2 h \,dx + \int_{\mathbb{R}^2} H \phi^2 h \,dx.
\]
By noticing the cancelation
\beno
\displaystyle \int_{\mathbb{R}^2} b \cdot \nabla h \phi^2 w \,dx + \int_{\mathbb{R}^2} b \cdot \nabla w \phi^2 h \,dx = - \int_{\mathbb{R}^2} b\cdot \nabla (\phi^2) hw \,dx,
\eeno
and then applying integration by parts, we arrive at
\begin{align}
\label{eq:vorticity equ}
& \hspace{-6mm} \int_{\mathbb{R}^2} \phi^2 |\nabla w |^2 \,dx + \int_{\mathbb{R}^2} \phi^2 |\nabla h|^2 \,dx \notag\\
& = -  \int_{\mathbb{R}^2} \nabla w  \cdot \nabla (\phi^2) w  \,dx
-  \int_{\mathbb{R}^2}  \nabla h \cdot \nabla (\phi^2)  h \,dx+ \frac12 \int_{\mathbb{R}^2} u \cdot \nabla  (\phi^2) w^2 \,dx  \notag\\
& \hspace{4mm}  + \frac12 \int_{\mathbb{R}^2} u \cdot \nabla  (\phi^2)  h^2 \,dx - \int_{\mathbb{R}^2} b\cdot \nabla (\phi^2) hw \,dx + \int_{\mathbb{R}^2} H h\phi^2 \,dx \notag\\
&\doteq I_1 + \cdots+ I_6.
\end{align}

\noi
In what follows we shall estimate $I_j$ for $j = 1,2,\cdots, 6$ one by one.

For the term $I_1$, by H\"older's inequality and \eqref{Dirichlet} we have
\begin{align*}
I_1 & \le \frac{C}{\tau-\rho} \| \nabla w \|_{L^2 (B_\tau)} \| w \|_{L^2 (B_\tau )} \\
& \leq \frac18 \int_{B_{\tau}} |\nabla w|^2 \,dx + \frac {C}{(\tau-\rho)^2},
\end{align*}
and similarly
\beno
I_2 \le \frac18 \int_{B_{\tau}} |\nabla h|^2 \,dx + \frac {C}{(\tau-\rho)^2} .
\eeno

\noi

For the terms $I_3,\cdots,I_5$, it only needs to consider $I_5$ since other terms can be treated similarly.
Let
\beno
\bar{f}(r)=\frac{1}{2\pi}\int_0^{2\pi}f(r,\theta)d\theta,
\eeno
then by Wirtinger's inequality (for example, see Ch II.5 \cite{Galdi}) we have
\ben\label{eq:Wirtinger}
\int_0^{2\pi}|f-\bar{f}|^2 \, d\theta\leq \int_0^{2\pi}|\partial_\theta f|^2d\theta.
\een
By H\"older inequality, (\ref{eq:Wirtinger}) and Lemma \ref{lem:GW} we have
\begin{align*}
I_5&\leq \left|\int_{\mathbb{R}^2}
 h\,w \,b\cdot\nabla\phi \,dx\right|\\
&\leq \left|\int_{\mathbb{R}^2}h\,w\,(b-\bar{b})\cdot\nabla\phi \,dx\right|+\left|\int_{\mathbb{R}^2}h\,w\,\bar{b}\cdot\nabla\phi \,dx\right|\\
&   \leq \frac{ C}{\tau-\rho}\left( \int_{B_\tau}w^4\right)^{\frac14}
\left( \int_{B_\tau}h^4\right)^{\frac14}\left( \int_{\frac{R}2<|r|<R} \int_0^{2\pi}|b(r,\theta)- \bar{b} |^2 \, d\theta \,r dr \right)^{\frac12}\\
&\hspace{8mm} + \frac{C}{\tau-\rho}  \int_{B_\tau\setminus B_{\frac{R}2}} |wh| \left( \int_0^{2\pi} |b (r,\theta)|^2 \, d\theta \right)^{\frac12} \,dx\\
&   \leq \frac{ CR}{\tau-\rho}\left( \int_{B_\tau}w^4\right)^{\frac14}\left( \int_{B_\tau}h^4\right)^{\frac14}\left( \int_{\frac{R}2<|r|<R}\frac{1}{r^2}  \int_0^{2\pi}|\partial_\theta b|^2d\theta \,rdr\right)^{\frac12}\\
&\hspace{8mm} +C \frac{(\ln R)^{\frac12}}{\tau-\rho} \int_{B_\tau}(w^2+h^2)\,dx.
\end{align*}
Using the following Poincar\'{e}-Sobolev inequality(see, for example, Theorem 8.11 and 8.12 \cite{LL})
\ben
\label{eq:poincare-sobolev}
\|w\|_{L^4(B_\tau)}\leq C \|\nabla w\|_{L^2(B_\tau)}^{\frac12}\|w\|_{L^2(B_\tau)}^{\frac12}+C\tau^{-1}\|w\|_{L^2(B_\tau)},
\een
we obtain
\beno
I_5& \leq &\frac{ CR}{\tau-\rho}\left( \int_{B_\tau}|\nabla w|^2+|\nabla h|^2\right)^{\frac12}+\frac{ CR\tau^{-2}}{\tau-\rho}+ \frac{C\sqrt{\ln R}}{\tau-\rho},
\eeno
where we used the boundedness of Dirichlet integral.
Thus
\begin{align*}
I_3+I_4+I_5 \leq   \frac18 \int_{B_{\tau}} |\nabla h|^2+ |\nabla w|^2\,dx+\frac{C R\tau^{-2}}{\tau-\rho}+ \frac{C\sqrt{\ln R}}{\tau-\rho}+\frac{C R^2}{(\tau-\rho)^2}.
\end{align*}

For the term $I_6$, using (\ref{eq:poincare-sobolev}) again we get
\begin{align*}
I_6 & = \int_{\mathbb{R}^2}\phi h H \,dx \leq C \|\nabla b\|_{L^4(B_\tau)}^2\|w\|_{L^2(B_\tau)}\\
&\leq   \frac18 \int_{B_{\tau}} |\nabla^2 b|^2\,dx+C(1+\tau^{-2}).
\end{align*}
Moreover, due to $\nabla^\bot=(\partial_2,-\partial_1)^\top$ and div$\,u=0$, there holds
\beno
\triangle u=\nabla^\bot (\partial_2u_1-\partial_1u_2)=\nabla^\bot  w.
\eeno
Thus by integration by parts we have
\begin{align}
\label{eq:embeding inequ}
& \hspace{-4mm} \int_{\mathbb{R}^2}\phi^2|\nabla^2 u|^2\,dx \notag \\
& \leq \frac43 \int_{\mathbb{R}^2}\phi^2|\triangle u|^2\,dx+\frac{C}{(\tau-\rho)^2}\int_{B_{\tau}}|\nabla u|^2\,dx\nonumber\\
& \leq  \frac43 \int_{B_{\tau}}\phi^2|\nabla w|^2\,dx+ \frac{C}{(\tau-\rho)^2}.
\end{align}

\noi
Collecting the estimates $I_1,\cdots,I_6$, by (\ref{eq:embeding inequ}) we get
\begin{align*}
& \hspace{-4mm}\frac34 \int_{B_\rho}|\nabla^2 u|^2+|\nabla^2 b|^2\,dx \\
&\leq \frac12 \int_{B_{\tau}} |\nabla^2 b|^2+ |\nabla^2 u|^2\,dx\\
& \hspace{6mm}+C(1+\tau^{-2})+\frac{ CR\tau^{-2}}{\tau-\rho}+ \frac{C\sqrt{\ln R}}{\tau-\rho}+\frac{ CR^2}{(\tau-\rho)^2}+\frac {C}{(\tau-\rho)^2}.
\end{align*}
Then by applying Lemma \ref{iter}, we obtain
\beno
\int_{B_{R/2}}|\nabla^2 u|^2+|\nabla^2 b|^2\,dx\leq C R^{-2}+C \frac{\sqrt{\ln R}}{R}+C.
\eeno
Finally, by taking $R\rightarrow\infty$, we arrive at \eqref{D2u}.

Now we turn to the proof of \eqref{decayu}.
By Gagliardo-Nirenberg inequality, one notices that
\[
\|\nabla u \|_{L^4(\R^2)} \leq C \| \nabla u \|_{L^2(\R^2)}^{\frac12} \| \nabla^2 u \|_{L^2(\R^2)}^{\frac12} .
\]
Then \eqref{decayu} follows from \eqref{D2u} and Lemma \ref{unidec}.
Therefore, the proof is complete.
\end{proof}

\begin{rem}\rm
Lemma \ref{lem:D2u} roughly says that by assuming the boundedness of the Dirichlet integral \eqref{Dirichlet}, $L^2$ norm of the gradient, one can bound the second order derivatives, \eqref{D2u}.
This is a manifestation of the smoothing effect,
which will be used as a substitution of the maximal principle in \cite{GW1978}.
Please also note that the assumptions on the magnetic field $b$ in Lemma \ref{lem:D2u} holds automatically for the Navier-Stokes equation since then $b=0$.
Therefore, in this perspective, the smoothing effect exploited by Lemma \ref{lem:D2u} is more robust than the maximal principle used in
\cite{GW1978}.
\end{rem}

Now we are ready to demonstrate the proof of Theorem \ref{thm1}.

\begin{proof}[Proof of Theorem \ref{thm1}.]
Making the inner product with $\eta_R^2{w}$ on both sides of the equation $(\ref{eq:MHDv-2D})_1$, and $\eta_R^2{h}$ on both sides of the equation $(\ref{eq:MHDv-2D})_2$,  we have
\begin{align}
\label{D1u}
\mu\int_{B_{R}} & |\nabla w|^2+\nu\int_{B_{R}}|\nabla h|^2\,dx\notag\\
&\leq  \frac{C}{R^2}\left(\int_{B_{2R}\setminus B_{R}}| {w}|^2+| {h}|^2\,dx\right)+ \frac{C}{R}\left(\int_{B_{2R}\setminus B_{R}}|u|| {w}|^2+|h|^2|u|+|b||h||w|\,dx\right)\notag\\
& \hspace{4mm} +\left|\int_{B_{2R}}H h\eta_R^2\,dx\right| \notag\\
& \doteq I_1+I_2+I_3,
\end{align}
where $\eta_R$ is as in \eqref{etaR} and
$H$ is as in \eqref{H}.

Terms $I_1$ and $I_2$ are easy to estimate.
By $D(u,b)\leq D_0$ and (\ref{decayu}) we have
\begin{align}
\label{I-1I-2}
|I_1|+ |I_2|\leq CR^{-2} +\frac{C}{R}\sqrt{\ln R}.
\end{align}

It remains to bound $I_3$. In what follows, we may assume that $I_3 = 2\partial_2 b_2 \partial_2 u_1$ since the treatments  for other terms are similar.
\begin{align*}
|I_{3}| &= \left| \int_{B_{2R}}\eta_R^2h\partial_2b_2\partial_2u_1\,dx \right| \\
&\leq \left|\int_{B_{2R}}b_2\partial_2\partial_2u_1h\eta_R^2\,dx\right| +\left|\int_{B_{2R}}b_2\partial_2u_1\partial_2h\eta_R^2\,dx\right| +\left|\int_{B_{2R}}b_2\partial_2u_1h\partial_2\eta_R^2\,dx\right|\\
&\leq \left(\int_{B_{2R}}|b_2h\eta_R|^2\,dx\right)^{1/2}\left(\int_{B_{2R}}|\partial_2\partial_2u_1\eta_R|^2\,dx\right)^{1/2}\\
& \hspace{4mm}
+\left(\int_{B_{2R}}|\partial_2h\eta_R|^2\,dx\right)^{1/2}\left(\int_{B_{2R}}|b_2\partial_2u_1\eta_R|^2\,dx\right)^{1/2}+\frac{C}{R}\sqrt{\ln R}.
\end{align*}
where we used $D(u,b)\leq D_0$ and (\ref{decayu}).
For the first factor of the first term,
due to the Gagliardo-Nirenberg inequality we have
\begin{align*}
& \hspace{-5mm}  \left(\int_{B_{2R}}|b_2h\eta_R|^2\,dx\right)^{1/2}\\
& \leq \|b\|_{L^4(\mathbb{R}^2)}\|h\|_{L^4(\mathbb{R}^2)}\\
& \leq C\|b\|_{L^1(\mathbb{R}^2)}^{\frac12}\|\nabla^2b\|_{L^2(\mathbb{R}^2)}^{\frac12}\|h\|_{L^2(\mathbb{R}^2)}^{\frac12}\|\nabla h\|_{L^2(\mathbb{R}^2)}^{\frac12}\\
&\leq C \left( \|b\|_{L^1(\mathbb{R}^2)} D_0^{\frac12} \right)^{\frac12}\|\nabla h\|_{L^2(\mathbb{R}^2)},
\end{align*}
where we used (\ref{D2u}), $D(u,b)\leq D_0$, and (\ref{eq:embeding inequ}).
Similarly, we have
\beno
\left(\int_{B_{2R}}|b_2\partial_2u_1|^2\,dx\right)^{1/2} \le  C \left( \|b\|_{L^1(\mathbb{R}^2)} D_0^{\frac12} \right)^{\frac12} \|\nabla h\|_2^{\frac12}\|\nabla w\|_2^{\frac12}.
\eeno
Hence, by letting $R\rightarrow\infty$, we conclude
\beno
I_{3}\leq C \left( \|b\|_{L^1(\mathbb{R}^2)} D_0^{\frac12} \right)^{\frac12} \Big( \|\nabla h\|_2\|\nabla w\|_2+  \|\nabla h\|_2^{\frac32}\|\nabla w\|_2^{\frac12} \Big).
\eeno
By choosing $\|b\|_{L^1(\mathbb{R}^2)} D_0^{\frac12} $ small enough, we arrive at
\begin{align}
\label{I-3}
I_{3}\leq \frac{\mu}{16}\|\nabla h\|_2^2+\frac{\nu}{16}\|\nabla w\|_2^2.
\end{align}
For instance, one may choose $
  \|b\|_{L^1(\mathbb{R}^2)} D_0^{\frac12} \leq C_*\min\{\mu\nu, \mu^{\frac12}\nu^{\frac32}\},
$
where $C_*$ is an absolute constant.

By collecting (\ref{D1u}), \eqref{I-1I-2}, and \eqref{I-3},  we finally get
\beno
\mu\int_{B_R}|\nabla w|^2+\nu\int_{B_R}|\nabla h|^2\,dx
\leq CR^{-2} +\frac{C}{R}\ln R + \frac{\mu}{16}\|\nabla w\|_2^2+\frac{\nu}{16}\|\nabla h\|_2^2.
\eeno
Consequently, letting $R\rightarrow\infty$, we conclude that
\beno
\nabla w= \nabla h=0.
\eeno
It follows that both $w$ and $h$ are constants.
Due to $D(u,b)\leq D_0$, we conclude that  $w=0$ and $h=0$.
Finally, since div$\,u=0$ and div$\,b=0$, it follows that
 $u$ and $b$ are constants.
Furthermore, one notices that $b=0$ since $b\in L^1$.
Thus the proof is finished.
\end{proof}

\section{Proof of Theorem \ref{thm2}}

In this section, the proof
relies on a Giaquinta's iteration lemma \cite[Lemma3.1]{G83}.
We assume that $\mu=\nu=1$ for simplicity.
The proof is split into four cases: $3\le p \le 6$, $2 \le p < 4$, $6< p < \infty $, and $p = \infty$.
The arguments for the former two cases are similar, the main point of which is to establish a gradient estimate;
while the later two cases appeal to estimates involving second order derivatives.
We shall give full detailed proofs for the first and third cases,
and indicate where modification is needed to treat the second and fourth cases.

Let us start with the first case.

\subsection{Case $3\le p \le 6$}
\label{3to6}
At first, we fix a $R\in \R_+$ and the cut-off function $\phi(x)\in C_0^\infty(B_R)$ as in the previous section.
\noi
By the choice of the parameters there holds
\beno
0<\frac{R}{2}<\frac23\tau<\frac{3}{4}R\leq\rho<\tau\leq R.
\eeno

Due to Theorem III 3.1 in \cite{Galdi}, there exists a constant $C(s)$ and a vector-valued function $\bar{w}: B_\tau\setminus B_{\frac23\tau}\rightarrow \mathbb{R}^2$ such that
$\bar{w}\in W^{1,s}_0(B_\tau\setminus B_{\frac23\tau})$, and
$\nabla\cdot \bar{w}(x)=\nabla_x\cdot[\phi(x) {u}(x)]$. Moreover,  we get
\begin{align}
\label{esti-ws}
\int_{B_\tau\setminus B_{\frac23\tau}}|\nabla \bar{w}(x)|^s\,dx
\leq C(s)\int_{B_\tau\setminus B_{\frac23\tau}}|\nabla\phi\cdot { u}|^s\,dx.
\end{align}
We thus can extend $\bar{w}$ to the whole space $\mathbb{R}^2$, which vanishes outside of the domain $B_\tau.$

\begin{proof}[Proof of Theorem \ref{thm2}: case $3\le p\le 6$]

Making the inner products $(\phi {u}-\bar{w})$ and $\phi b$ on both sides of the equation (\ref{eq:MHD}), by $\nabla\cdot \bar{w}=\nabla\cdot[\phi{u}]$ we have
\begin{align*}
&\hspace{-8mm}\int_{B_\tau}\phi|\nabla u|^2\,dx
\\&= -\int_{B_\tau}\nabla\phi\cdot\nabla u \cdot {u} \,dx+\int_{B_\tau\setminus B_{\frac23\tau}}\nabla \bar{w}:\nabla u  \,dx
-\int_{B_\tau}u\cdot\nabla u \cdot \phi {u} \,dx\\
& \hspace{5mm}+\int_{B_\tau\setminus B_{\frac23\tau}}u\cdot\nabla u \cdot \bar{w} \,dx +\int_{B_\tau}b\cdot\nabla b \cdot \phi {u} \,dx-\int_{B_\tau\setminus B_{\frac23\tau}}b\cdot\nabla b \cdot \bar{w} \,dx \\
&=  I_1+\cdots+I_6,
\end{align*}
and
\begin{align*}
&\hspace{-8mm}\int_{B_\tau}\phi|\nabla b|^2\,dx
\\&= -\int_{B_\tau}\nabla\phi\cdot\nabla b\cdot {b} \,dx
-\int_{B_\tau}u\cdot\nabla b \cdot \phi {b} \,dx
+\int_{B_\tau}b\cdot\nabla u \cdot \phi {b} \,dx \\
&= I_1'+I_2'+I_3'.
\end{align*}

For the term $I_1$, by H\"{o}lder inequality  we have
\beno
|I_1|\leq \frac{C}{\tau-\rho}\left(\int_{B_\tau}|\nabla u|^2\,dx\right)^{\frac12}\left(\int_{B_R\setminus B_{R/2}}| {u}|^2\,dx\right)^{\frac12}.
\eeno

\noindent
For the term $I_2$, H\"{o}lder inequality and (\ref{esti-ws}) imply that
\begin{align*}
|I_2|&\leq C\left(\int_{B_\tau}|\nabla u|^2\,dx\right)^{\frac12}\|\nabla \bar{w}\|_{L^{2}(B_\tau\setminus B_{\frac23\tau})}\\
&\leq  \frac{C}{\tau-\rho}\|\nabla u\|_{L^2(B_\tau)} \|{u}\|_{L^2(B_R\setminus B_{R/2})}.
\end{align*}

\noi
By integration by parts, it is easy to find that
%
\beno
|I_3| \leq \frac{C}{\tau-\rho}\|{u}\|_{L^3(B_R\setminus B_{R/2})}^3.
\eeno

\noi
For the term $I_4$, integration by parts leads to
\[
I_4 = -\int_{B_\tau\setminus B_{\frac23\tau}} u \cdot \nabla \bar{w} \cdot u \, dx.
\]

\noi
Then in view of \eqref{esti-ws} we find
\begin{align*}
|I_4| & \le  \|{u}\|_{L^3(B_R\setminus B_{R/2})}^2 \| \nabla \bar{w}\|_{L^3}\\
& \le \frac{C}{\tau-\rho} \|{u}\|_{L^3(B_R\setminus B_{R/2})}^3.
\end{align*}

\noi
For the term $I_5$, we need a cancellation with $I'_3$. More precisely,
\[
I_5 + I_3' = - \int_{B_\tau} \big( b \otimes b \big) : \big( \nabla \phi \otimes u \big),
\]
and 
it follows that

\beno
|I_5 + I'_3| \leq \frac{C}{\tau-\rho}
\Big( \|{u}\|_{L^3(B_R\setminus B_{R/2})}^3+  \|{b}\|_{L^3(B_R\setminus B_{R/2})}^3 \Big).
\eeno

\noi
The treatment for $I_6$ is similar to $I_4$ and

\[
|I_6| \le  \frac{C}{\tau-\rho} \|{b}\|_{L^3(B_R\setminus B_{R/2})}^2 \|{u}\|_{L^3(B_R\setminus B_{R/2})}.
\]

\noi
For the terms $I'_1$ and $I_2'$, similar as $I_1$ and $I_3$ respectively, we find

\beno
|I_1'|+|I_2'|\leq \frac{C}{\tau-\rho}\|\nabla b\|_{L^2(B_\tau)} \|{b}\|_{L^2(B_R\setminus B_{R/2})}+  \frac{C}{\tau-\rho}
\Big(\|{u}\|_{L^3(B_R\setminus B_{R/2})}^3+  \|{b}\|_{L^3(B_R\setminus B_{R/2})}^3 \Big).
\eeno

By setting
\begin{align}
\label{f}
f(r) = \int_{B_r} |\nabla u|^2 + |\nabla b|^2 \,dx,
\end{align}
collecting the above estimates we have
\begin{align*}
f(\rho) & \leq \frac12 f(\tau) + \frac{C}{\tau-\rho}
\Big(\|{u}\|_{L^3(B_R\setminus B_{R/2})}^3+  \|{b}\|_{L^3(B_R\setminus B_{R/2})}^3 \Big)\\
& \hspace{18mm}+ \frac{C}{(\tau-\rho)^2}
\Big(\|{u}\|_{L^2(B_R\setminus B_{R/2})}^2+  \|{b}\|_{L^2(B_R\setminus B_{R/2})}^2 \Big).
\end{align*}

\noi
Now we apply Lemma \ref{iter} with $R_0 = \frac{3R}4$ and $R_1 = R$ to obtain

\begin{align}
\label{uniloc}
& \hspace{-6mm}\int_{B_{R/2}}|\nabla u|^2+|\nabla b|^2\,dx \notag \\
&\leq  \frac{C}{R^2} \Big(\|{u}\|_{L^2(B_R\setminus B_{R/2})}^2 +  \|{b}\|_{L^2(B_R\setminus B_{R/2})}^2 \Big) \notag \\
 & \hspace{16mm} + \frac{C}{R} \Big(\|{u}\|_{L^3(B_R\setminus B_{R/2})}^3+  \|{b}\|_{L^3(B_R\setminus B_{R/2})}^3 \Big) \notag \\
&\leq  CR^{-\frac4p} \Big( \|u\|_{L^p(B_R\setminus B_{R/2})}^2 +  \|b\|_{L^p(B_R\setminus B_{R/2})}^2 \Big)  \notag \\
&  \hspace{16mm}+ CR^{1-\frac6p}
\Big( \|u\|_{L^p(B_R\setminus B_{R/2})}^3 +  \|b\|_{L^p(B_R\setminus B_{R/2})}^3 \Big).
\end{align}
for all $p \ge 3$.

Hence, for $p\in [3,6]$, we get
\beno
\lim_{R\to \infty} \int_{B_{R/2}}|\nabla u|^2+|\nabla b|^2\,dx = 0,
\eeno
provided $u,b\in L^p(\R^2)$. It follows that $u$ and $b$ are constants, thus $u\equiv b\equiv0$.
Therefore we finish the proof.
\end{proof}

\vspace{4mm}

\noindent
By incorporating with the translation, the estimate \eqref{uniloc} implies the following uniform local estimate

\begin{cor}
\label{le-uni-loc}
For smooth solutions $u,b$ to the MHD equations \eqref{eq:MHD}, we have
\begin{align}
\label{uni-loc}
&\hspace{-8mm} \int_{B_{R/2}(x_0)}|\nabla u|^2+|\nabla b|^2\,dx  \notag \\
&\leq  CR^{-\frac4p} \Big( \|u\|_{L^p(B_{R}(x_0)\setminus B_{R/2}(x_0))}^2 +  \|b\|_{L^p(B_{R}(x_0)\setminus B_{R/2}(x_0))}^2 \Big) \notag\\
&  \hspace{8mm} +CR^{1-\frac6p}
\Big( \|u\|_{L^p(B_{R}(x_0)\setminus B_{R/2}(x_0))}^3 +  \|b\|_{L^p(B_{R}(x_0)\setminus B_{R/2}(x_0))}^3 \Big),
\end{align}
provided $u,b \in L^p(\R^2)$ with $3\leq p\leq \infty$.
\end{cor}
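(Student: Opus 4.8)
The plan is to observe that Corollary~\ref{le-uni-loc} is nothing but the estimate \eqref{uniloc} proved above, but centered at an arbitrary point $x_0\in\R^2$ rather than at the origin. So the proof amounts to checking that the entire argument leading to \eqref{uniloc}---which used only the structure of the MHD system \eqref{eq:MHD}, the cut-off function $\phi$, the Bogovskii-type corrector $\bar w$ from \cite{Galdi}, and the iteration Lemma~\ref{iter}---is invariant under spatial translation. Concretely, for fixed $x_0$ I would replace the balls $B_\rho, B_\tau, B_R, B_{R/2}$ everywhere by $B_\rho(x_0), B_\tau(x_0), B_R(x_0), B_{R/2}(x_0)$, and take the cut-off $\phi$ to be radially decreasing about $x_0$ with $\phi\equiv 1$ on $B_\rho(x_0)$, $\phi\equiv 0$ outside $B_\tau(x_0)$, and $|\nabla\phi|\le C/(\tau-\rho)$; likewise the corrector $\bar w$ is constructed on the annulus $B_\tau(x_0)\setminus B_{\frac23\tau}(x_0)$ via Theorem~III.3.1 of \cite{Galdi}, with the same estimate \eqref{esti-ws}. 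Since none of the constants produced by H\"older's inequality, integration by parts, or Theorem~III.3.1 depend on the center of the ball, the bound for $f(\rho):=\int_{B_\rho(x_0)}|\nabla u|^2+|\nabla b|^2\,dx$ in terms of $f(\tau)$ and the $L^2,L^3$ norms of $u,b$ over $B_R(x_0)\setminus B_{R/2}(x_0)$ is identical to the one obtained before.

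Having re-run the argument, I would then apply Lemma~\ref{iter} exactly as in the proof of Theorem~\ref{thm2} with $R_0=\frac{3R}{4}$ and $R_1=R$ to absorb the $\frac12 f(\tau)$ term, yielding
\[
\int_{B_{R/2}(x_0)}|\nabla u|^2+|\nabla b|^2\,dx
\le \frac{C}{R^2}\Big(\|u\|_{L^2(B_R(x_0)\setminus B_{R/2}(x_0))}^2+\|b\|_{L^2(B_R(x_0)\setminus B_{R/2}(x_0))}^2\Big)
+\frac{C}{R}\Big(\|u\|_{L^3}^3+\|b\|_{L^3}^3\Big),
\]
the $L^3$ norms being taken over the same annulus. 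Finally, for $u,b\in L^p(\R^2)$ with $3\le p\le\infty$, H\"older's inequality on the annulus $B_R(x_0)\setminus B_{R/2}(x_0)$ (whose measure is $\sim R^2$) converts the $L^2$ and $L^3$ norms into $L^p$ norms at the cost of the powers $R^{-4/p}$ and $R^{1-6/p}$ respectively, giving \eqref{uni-loc}. The constant $C$ remains absolute (independent of $x_0$ and $R$).

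There is no genuine obstacle here; the corollary is deliberately stated so that its proof is a one-line translation-invariance remark. The only point requiring a modicum of care is that the chain of inequalities for $I_1,\dots,I_6,I_1',I_2',I_3'$ never exploited any special property of the origin---in particular the cancellations $I_5+I_3'$ and the Bogovskii estimate \eqref{esti-ws} are purely local and translation-covariant---so one must simply note this rather than reproduce the computation. I would therefore present the proof as: ``The estimate \eqref{uniloc} was derived using only translation-invariant ingredients; repeating the argument with all balls centered at $x_0$ and applying H\"older's inequality on the annulus yields \eqref{uni-loc}.''
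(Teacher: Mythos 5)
Your proposal is correct and coincides with the paper's own (essentially one-line) justification: the paper simply states that \eqref{uniloc} ``by incorporating with the translation'' yields \eqref{uni-loc}, since every ingredient of the derivation (cut-off, Bogovskii corrector, H\"older, Lemma \ref{iter}) is translation-invariant, and the H\"older step on the annulus of measure $\sim R^2$ produces exactly the powers $R^{-4/p}$ and $R^{1-6/p}$ for $p\ge 3$. Nothing further is needed.
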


\noindent
In particular, the above lemma says that $\nabla u$ and $\nabla b$ are uniformly locally in $L^2 (\R^2)$, which will be denoted by $u,b \in \dot H^1_{\textup{uloc}}$, by assuming $u,b \in L^p(\R^2)$ for some $p\ge 3$.
From Corollary \ref{le-uni-loc} one easily obtains the following estimate on the growth of the Dirichlet integral,

\begin{cor}
\label{coro-growth}
For smooth solutions $u,b$ to the MHD equations \eqref{eq:MHD},  we have
\[
\int_{B_{R}(x_0)}|\nabla u|^2+|\nabla b|^2\,dx  \les 1+ R^{1-\frac6p},
\]
provided  $u,b \in L^p(\R^2)$ with $3\leq p\leq \infty$.
\end{cor}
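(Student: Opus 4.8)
The plan is to read Corollary \ref{coro-growth} off directly from the uniform local bound \eqref{uni-loc}, with no further use of the equation \eqref{eq:MHD}. The key observation is that \eqref{uni-loc} already controls the Dirichlet integral over a whole ball, not merely an annulus, because $B_R(x_0)=B_{(2R)/2}(x_0)$. So first I would apply Corollary \ref{le-uni-loc} with $2R$ in place of $R$, which gives
\begin{align*}
\int_{B_R(x_0)}|\nabla u|^2+|\nabla b|^2\,dx
&\leq C(2R)^{-\frac4p}\Big(\|u\|_{L^p(B_{2R}(x_0)\setminus B_R(x_0))}^2+\|b\|_{L^p(B_{2R}(x_0)\setminus B_R(x_0))}^2\Big)\\
&\quad+C(2R)^{1-\frac6p}\Big(\|u\|_{L^p(B_{2R}(x_0)\setminus B_R(x_0))}^3+\|b\|_{L^p(B_{2R}(x_0)\setminus B_R(x_0))}^3\Big).
\end{align*}

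Next I would bound the $L^p$ norms over the annulus $B_{2R}(x_0)\setminus B_R(x_0)$ by the corresponding global norms, $\|u\|_{L^p(B_{2R}(x_0)\setminus B_R(x_0))}\leq\|u\|_{L^p(\R^2)}$ for $p<\infty$ and $\leq\|u\|_{L^\infty(\R^2)}$ for $p=\infty$, and similarly for $b$. Under the hypotheses of Theorem \ref{thm2} these global quantities are finite constants, so the right-hand side is $\les(2R)^{-\frac4p}+(2R)^{1-\frac6p}$. For $R\geq\frac12$ one has $(2R)^{-\frac4p}\leq1$ (as $-\frac4p<0$) and $(2R)^{1-\frac6p}\les R^{1-\frac6p}$ (since $|1-\frac6p|\leq1$ for $p\geq3$), giving the desired bound $\les1+R^{1-\frac6p}$; for $0<R<\frac12$ one simply uses $B_R(x_0)\subset B_{1/2}(x_0)$ together with the uniform local $\dot H^1$ bound $u,b\in\dot H^1_{\textup{uloc}}$ recorded after Corollary \ref{le-uni-loc} (the special case of \eqref{uni-loc} at a fixed radius), which already yields $\les1$. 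Combining the two ranges of $R$ finishes the proof.

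There is essentially no genuine obstacle here, as the statement is a routine corollary of \eqref{uni-loc}. The only two points needing a word of care are the endpoint $p=\infty$, where the powers $R^{-\frac4p}$ and $R^{1-\frac6p}$ degenerate to $1$ and $R$ and one must invoke the $L^\infty$ bound on the annulus in lieu of an honest $L^p$ norm, and the behaviour at small $R$, which is absorbed by the uniform local $\dot H^1$ estimate. One could alternatively cover $B_R(x_0)$ by unit balls and sum \eqref{uni-loc} at a fixed scale, but this is unnecessary.
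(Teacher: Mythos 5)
Your proof is correct and is exactly the argument the paper intends: Corollary \ref{coro-growth} is presented as an immediate consequence of the uniform local estimate \eqref{uni-loc}, obtained by applying it at radius $2R$ and bounding the annular $L^p$ norms by the global ones (with monotonicity of the integral handling small $R$ and the $p=\infty$ endpoint read off in the same way). Only your parenthetical alternative is off: covering $B_R(x_0)$ by unit balls and summing \eqref{uni-loc} at a fixed scale would only give the weaker bound $\lesssim R^2$ rather than $1+R^{1-\frac6p}$, but since you do not use it this does not affect the argument.
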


\noi
These two properties are of particular importance in what follows.

%
%

\subsection{Case $2\le  p < 4$.}

\begin{proof}[Proof of Theorem \ref{thm2}: case $2\le  p < 4$.]
The argument for this case is similar to that of the previous one. While different treatments are needed to deal with the nonlinear terms $I_3,\cdots,I_6$, and $I_2', I_3'$.
However, the methods to estimate each of these terms are similar and thus we only compute one term,
say $I_4$,
to illustrate the idea.

With the help of (\ref{esti-ws}) and  (\ref{eq:poincare-sobolev}), we have
\begin{align*}
|I_4| & = \Big| \int_{B_\tau\setminus B_{\frac23\tau}}u\cdot\nabla \bar{w} \cdot u \,dx \Big| \\
&\leq\|u\|_{L^4(B_\tau\setminus B_{\frac23\tau})}^2\|\nabla \bar{w}\|_{L^2(B_\tau\setminus B_{\frac23\tau})}\\
&\leq \frac{C}{\tau-\rho} \|{u}\|_{L^2(B_R\setminus B_{R/2})}\left[  \|{u}\|_{L^2(B_{\tau})} \|\nabla{u}\|_{L^2(B_{\tau})}+ \tau^{-2}\|{u}\|_{L^2(B_{\tau})}^2\right]\\
&\leq \frac18\|\nabla{u}\|_{L^2(B_{\tau})}^2+\frac{C}{(\tau-\rho)^2} \|{u}\|_{L^2(B_R\setminus B_{R/2})}^2 \|{u}\|_{L^2(B_{\tau})}^2 \\
& \hspace{6mm} + \frac{C}{\tau^2(\tau-\rho)} \|{u}\|_{L^2(B_R\setminus B_{R/2})}  \|{u}\|_{L^2(B_{\tau})}^2.
\end{align*}

\noi
Similar arguments for all other terms finally lead to
\begin{align*}
f(\rho) & \le \frac12 f(\tau) + \frac{C}{(\tau-\rho)^2} \Big( \|{u}\|_{L^2(B_R)}^2 + \|{b}\|_{L^2(B_R)}^2 \Big)
+ \frac{C}{(\tau-\rho)^2} \Big( \|{u}\|_{L^2(B_R)}^4 + \|{b}\|_{L^2(B_R)}^4 \Big) \\
& \hspace{6mm} + \frac{C}{\tau^2(\tau-\rho)} \Big(  \|{u}\|_{L^2(B_{R})}^3 + \|{b}\|_{L^2(B_R)}^3 \Big),
\end{align*}
where $f(\rho)$ was defined in \eqref{f} and $\frac{3R}4 \le \rho < \tau \le R$.
Then we apply Lemma \ref{iter} to obtain
\begin{align*}
& \hspace{-6mm} \int_{B_{R/2}}|\nabla u|^2+|\nabla b|^2\,dx\\
&\leq  \frac{C}{R^2} \Big(\|{u}\|_{L^2(B_R)}^2+  \|{b}\|_{L^2(B_R)}^2 \Big)
+ \frac{C}{R^3} \Big(\|{u}\|_{L^2(B_R)}^3+  \|{b}\|_{L^2(B_R)}^3 \Big)\\
& \hspace{6mm} + \frac{C}{R^{2}} \Big(\|{u}\|_{L^2(B_R)}^4+  \|{b}\|_{L^2(B_R)}^4 \Big)\\
&\leq  CR^{-\frac4p} \Big(\|{u}\|_{L^p (B_R)}^2+  \|{b}\|_{L^2(B_R)}^2 \Big)
+ CR^{-\frac{6}p} \Big(\|{u}\|_{L^p (B_R)}^3+  \|{b}\|_{L^p (B_R)}^3 \Big)\\
& \hspace{6mm} +CR^{2-\frac8p} \Big(\|{u}\|_{L^p (B_R)}^4+  \|{b}\|_{L^p (B_R)}^4 \Big),
\end{align*}

\noi
which implies the triviality of $u,b$ when $2\le p<4$.
Therefore we complete the proof for this case.\end{proof}

\subsection{Case $6< p < \infty$.} We'll prove that the case can imply the case of $p=\infty$, hence it is sufficient to consider $p=\infty$ in the next subsection. Under the natural scaling, we can assume that  $\|u\|_{L^p(\mathbb{R}^2)}+\|b\|_{L^p(\mathbb{R}^2)}\leq 1$.

Now we turn to the vorticity and current-density equation
(\ref{eq:MHDv-2D}).
As we have seen in the previous subsection, when $p\le 6$, from \eqref{uni-loc} one has a decay estimate on the gradients
\[
\int_{B_R} |\nabla u|^2 + |\nabla b|^2 \,dx = o \left(\frac1R \right),
\]
as $R\to \infty$, from which the theorem follows.
However, this argument fails when $p> 6$ as the left hand side of \eqref{uni-loc} may fail to decay to zero as $R\to \infty$.
To circumvent this difficulty, we exploit the local properties of the solution instead.
To be more precise, by choosing $R = 2$, \eqref{uni-loc} becomes
\beno
\int_{B_1(x_0)}|\nabla u|^2+|\nabla b|^2\,dx  \leq C(p)\left [1  +
 \|u\|_{L^p(B_2(x_0))}^3 +  \|b\|_{L^p((B_2(x_0))}^3\right]\leq C(p) ,
\eeno
from which we shall show $u,b$ are bounded globally.
To this purpose, we shall first prove that $\nabla^2 u$ and $\nabla^2 b$ are uniformly locally $L^2$ bounded

\begin{lem}
\label{le-uni-loc2}
For $p>6$, assume $u,b\in L^p(\R^2)$ are smooth solutions to \eqref{eq:MHD} and
\beno
 \|u\|_{L^p(\mathbb{R}^2)}+\|b\|_{L^p(\mathbb{R}^2)}\leq 1.
\eeno
 Then, we have
\begin{align}
\label{uni-loc2}
\sup_{x_0} \int_{B_1(x_0)}|\nabla^2 u|^2+|\nabla^2 b|^2\,dx & < \infty.
\end{align}
\end{lem}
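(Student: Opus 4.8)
The plan is to control $\int_{B_1(x_0)}|\nabla w|^2+|\nabla h|^2\,dx$ uniformly in $x_0$, where $w=\partial_2u_1-\partial_1u_2$ and $h=\partial_2b_1-\partial_1b_2$ solve the vorticity--current system \eqref{eq:MHDv-2D}; given this, \eqref{uni-loc2} follows at once from $\Delta u=\nabla^\perp w$, $\Delta b=\nabla^\perp h$ and the interior estimate \eqref{eq:embeding inequ} (and its analogue for $b$) applied with a cut-off supported in $B_2(x_0)$ and equal to $1$ on $B_1(x_0)$. By the translation invariance of \eqref{eq:MHD} we may take $x_0=0$. Corollary \ref{le-uni-loc} applied with $R=8$, together with the normalization $\|u\|_{L^p(\R^2)}+\|b\|_{L^p(\R^2)}\le1$, gives the base bound $\int_{B_4}|\nabla u|^2+|\nabla b|^2\,dx\le C(p)$, and hence $\|w\|_{L^2(B_4)}+\|h\|_{L^2(B_4)}\le C(p)$ (since $|w|+|h|\lesssim|\nabla u|+|\nabla b|$ pointwise); every constant $C(p)$ below is of this kind, in particular independent of $x_0$. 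Note that $f(r):=\int_{B_r}|\nabla w|^2+|\nabla h|^2\,dx$ is finite for each $r$ because $u,b$ are smooth, but a priori not bounded uniformly in $x_0$; the whole point is to upgrade this to a uniform bound via the iteration lemma.

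First I would reproduce the localized energy identity of Lemma \ref{lem:D2u} on the range $2\le\rho<\tau\le4$: for a radial $\phi\in C_0^\infty(B_\tau)$ with $\phi\equiv1$ on $B_\rho$ and $|\nabla\phi|\le C/(\tau-\rho)$, testing $(\ref{eq:MHDv-2D})_1$ against $\phi^2 w$ and $(\ref{eq:MHDv-2D})_2$ against $\phi^2 h$, adding, and using the cancellation $\int b\cdot\nabla h\,\phi^2 w+\int b\cdot\nabla w\,\phi^2 h=-\int b\cdot\nabla(\phi^2)\,hw$ (from $\mathrm{div}\,b=0$) gives exactly \eqref{eq:vorticity equ}, i.e. $\int\phi^2|\nabla w|^2+\int\phi^2|\nabla h|^2=I_1+\cdots+I_6$ with the $I_j$ as there. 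The linear pair is immediate: $|I_1|+|I_2|\le\tfrac18\int_{B_\tau}(|\nabla w|^2+|\nabla h|^2)\,dx+C(p)(\tau-\rho)^{-2}$, using $\|w\|_{L^2(B_\tau)}+\|h\|_{L^2(B_\tau)}\le C(p)$.

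The crux is the nonlinear block $I_3,\dots,I_6$, where, in contrast to Section 3, one has neither the uniform decay \eqref{decayu} nor any bound on $\|u\|_{L^\infty}$, $\|b\|_{L^\infty}$. For $I_3=\tfrac12\int u\cdot\nabla(\phi^2)w^2$ I would use H\"older to get $|I_3|\le\frac{C}{\tau-\rho}\|u\|_{L^p(B_\tau)}\|w\|_{L^{2p'}(B_\tau)}^2$ with $2p'=\tfrac{2p}{p-1}\in(2,3)$, then interpolate in the plane (Gagliardo--Nirenberg--Poincar\'e, cf. \eqref{eq:poincare-sobolev}): $\|w\|_{L^{2p'}(B_\tau)}^2\le C\|w\|_{L^2(B_\tau)}^{2-2/p}\|\nabla w\|_{L^2(B_\tau)}^{2/p}+C\|w\|_{L^2(B_\tau)}^2$. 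Since $\|u\|_{L^p}\le1$ and $\|w\|_{L^2(B_\tau)}\le C(p)$, Young's inequality with exponents $p$ and $\tfrac{p}{p-1}$ (so that the gradient, which enters only to the power $2/p<1$, is traded for a fixed fraction of $\int_{B_\tau}|\nabla w|^2$) gives $|I_3|\le\tfrac1{16}\int_{B_\tau}|\nabla w|^2\,dx+C(p)(\tau-\rho)^{-p/(p-1)}+C(p)(\tau-\rho)^{-1}$; as $\tau-\rho\le1$ and $1<\tfrac{p}{p-1}<2$, all three negative powers are dominated by $(\tau-\rho)^{-2}$. The term $I_4$ is treated identically with $h$ in place of $w$, and $I_5=-\int b\cdot\nabla(\phi^2)hw$ the same way after $\|h\|_{L^{2p'}}\|w\|_{L^{2p'}}\le\tfrac12(\|h\|_{L^{2p'}}^2+\|w\|_{L^{2p'}}^2)$ and $\|b\|_{L^p}\le1$. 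For $I_6=\int Hh\,\phi^2$, using $|H|\lesssim|\nabla u||\nabla b|$ and $|h|\lesssim|\nabla b|$ I would bound $|I_6|\le C\|\nabla u\|_{L^2(B_\tau)}\|\phi\nabla b\|_{L^4(B_\tau)}^2$, then apply Ladyzhenskaya's inequality to the compactly supported $\phi\nabla b$, $\|\phi\nabla b\|_{L^4}^2\le C\|\phi\nabla b\|_{L^2}\big(\|\phi\nabla^2 b\|_{L^2}+\tfrac{C}{\tau-\rho}\|\nabla b\|_{L^2(B_\tau)}\big)$, and insert the counterpart of \eqref{eq:embeding inequ} for $b$, namely $\int\phi^2|\nabla^2 b|^2\le\tfrac43\int\phi^2|\nabla h|^2+C(p)(\tau-\rho)^{-2}$; using $\|\nabla u\|_{L^2(B_\tau)},\|\nabla b\|_{L^2(B_\tau)}\le C(p)$ and $ab\le\tfrac16a^2+Cb^2$ this yields $|I_6|\le\tfrac16\int\phi^2|\nabla h|^2\,dx+C(p)(\tau-\rho)^{-2}$, whose first term is absorbed on the left-hand side.

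Collecting all of the above, absorbing $\tfrac16\int\phi^2|\nabla h|^2$ on the left and using $\phi\equiv1$ on $B_\rho$, I arrive at $f(\rho)\le\theta f(\tau)+C(p)(\tau-\rho)^{-2}$ for all $2\le\rho<\tau\le4$, with a fixed $\theta<1$ (the contributions of $I_1$--$I_5$ and of the absorbed part of $I_6$ summing to a coefficient strictly below $1$ by the choices of fractions above). Giaquinta's iteration Lemma \ref{iter} then gives $f(2)\le C(p)$, uniformly in $x_0$, and \eqref{eq:embeding inequ} (with its analogue for $b$) on $B_1(x_0)$ --- with data bounded by $f(3/2)\le f(2)$ and $\int_{B_2}|\nabla u|^2+|\nabla b|^2\le C(p)$ --- yields \eqref{uni-loc2}. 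I expect the genuine obstacle to be the treatment of the convective and stretching terms $I_3$--$I_5$: the estimate closes only because, after interpolation, $\|\nabla w\|_{L^2(B_\tau)}$ enters to a power $<2$, so Young's inequality returns a fraction $\theta<1$ of $f(\tau)$ whose coefficient does \emph{not} blow up as $\tau-\rho\to0$; keeping $\theta$ and every other constant independent of $x_0$ forces one to work exclusively with the base-point-free quantities $\|u\|_{L^p}+\|b\|_{L^p}\le1$ and $\int_{B_4}|\nabla u|^2+|\nabla b|^2\le C(p)$.
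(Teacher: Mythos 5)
Your proposal is correct and follows the same overall strategy as the paper: reduce to a Caccioppoli-type estimate for the vorticity--current pair $(w,h)$, feed in the uniform local $H^1$ bound from Corollary \ref{le-uni-loc}, close with Giaquinta's iteration Lemma \ref{iter}, and pass back to $\nabla^2 u,\nabla^2 b$ via \eqref{eq:embeding inequ}; your treatments of $I_1,I_2$ and of $I_6$ (Ladyzhenskaya on $\phi\nabla b$ plus the $b$-analogue of \eqref{eq:embeding inequ}) coincide with the paper's. The one place where you genuinely diverge is the convection/stretching block $I_3$--$I_5$. The paper subtracts means, writing $I_3\lesssim\frac{1}{\tau-\rho}\int|w^2-(w^2)_{B_\tau}|\,|u-u_{B_\tau}|+\frac{|u_{B_\tau}|}{\tau-\rho}\int w^2$, and then invokes the borderline two-dimensional Poincar\'e--Sobolev embedding $\|g-\bar g\|_{L^2(B_\tau)}\lesssim\|\nabla g\|_{L^1(B_\tau)}$ applied to $g=w^2$ (and $h^2$, $wh$), so that $\|\nabla(w^2)\|_{L^1}\le 2\|w\|_{L^2}\|\nabla w\|_{L^2}$ produces a quadratic gradient term with a small prefactor $\varepsilon$ to be absorbed; only the uniform local $L^1$ norm of $u$ is used for the mean term. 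You instead apply H\"older directly against $\|u\|_{L^p}\le1$, interpolate $\|w\|_{L^{2p/(p-1)}}$ between $L^2$ and $\dot H^1$, and close with Young's inequality because $\|\nabla w\|_{L^2}$ then enters with the sublinear power $2/p<2$. Both arguments are valid and yield the same iteration inequality with constants uniform in $x_0$; the paper's version is marginally more robust in that it needs only $u,b\in L^1_{\mathrm{uloc}}$ uniformly at this step, whereas yours exploits the global $L^p$ normalization more directly and avoids the $W^{1,1}\hookrightarrow L^2$ borderline embedding.
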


\begin{proof}
The idea of the proof is similar to that of Lemma \ref{lem:D2u}.
In view of Corollary \ref{le-uni-loc}, we have the local boundedness \eqref{uni-loc}.
Then, there holds
\begin{align}
\label{uni-loc-2}
\int_{B_1(x_0)}|\nabla u|^2+|\nabla b|^2\,dx  \leq C.
\end{align}
Without loss of any generality, we may assume $x_0 = 0$ for simplicity.

Let $\phi$ be defined as in Subsection 4.1 with $R = 2$.  Multiplying both sides of the vorticity and current-density equation
\eqref{eq:MHDv-2D} by $\phi^2 w$ and $\phi^2 h$ respectively and then
integrating over $\R^2$ to get
\begin{align}
\label{I1-I6}
& \hspace{-6mm} \int \phi^2 |\nabla w |^2 \,dx + \int \phi^2 |\nabla h|^2 \,dx \notag\\
& = -  \int \nabla w  \cdot \nabla (\phi^2) w  \,dx
-  \int  \nabla h \cdot \nabla (\phi^2)  h \,dx+ \frac12 \int u \cdot \nabla  (\phi^2) w^2 \,dx  \notag\\
& \hspace{4mm}  + \frac12 \int u \cdot \nabla  (\phi^2)  h^2 \,dx - \int b\cdot \nabla (\phi^2) hw \,dx + \int H h\phi^2 \,dx \notag\\
& \doteq I_1 + \cdots+ I_6.
\end{align}

\noi
In what follows we shall estimate $I_j$ for $j = 1,2,\cdots, 6$ one by one.

For the term $I_1$, by H\"older's inequality and \eqref{uni-loc-2} we have
\begin{align*}
I_1 & \le \frac{C}{\tau-\rho} \| \nabla w \|_{L^2 (B_\tau)} \| w \|_{L^2 (B_2 )} \\
& \le \frac18 \int_{B_{\tau}} |\nabla w|^2 \,dx + \frac {C}{(\tau-\rho)^2} ,
\end{align*}
where $B_\tau \subset B_2$.
For the term $I_2$, similar argument as the case $I_1$ gives
\begin{align*}
I_2 \le \frac{C}{\tau - \rho} \| \nabla h \|_{L^2 (B_\tau)} \| h\|_{L^2 (B_2)} \le \frac18 \int_{B_{\tau}} | \nabla h  |^2  \,dx  + \frac{C}{(\tau-\rho)^2}.
\end{align*}

To estimate the term $I_3$, we set
\[
(w^2)_{B_{\tau}} = \frac1{|B_{\tau}|} \int_{B_{\tau}} w^2 \,dx \quad \text{ and } \quad
 u_{B_{\tau}} = \frac1{|B_{\tau}|} \int_{B_{\tau}} u \,dx ,
\]
be the means of $w^2$ and $u$ over the ball $B_{\tau}$,
we have
\begin{align*}
I_3 & \leq \left|\int u \cdot \nabla w \phi^2 w \,dx\right| = \frac12 \left|\int u \cdot \nabla \big( w^2 - (w^2)_{B_{\tau}} \big)\phi^2 \,dx\right|\\
& \leq \frac{C}{\tau - \rho} \int_{B_{\tau}} \big| w^2 - (w^2)_{B_{\tau}}  \big| |u -  u_{B_{\tau}} | \,dx +  C\frac{| u_{B_{\tau}} |}{\tau - \rho} \int_{B_{\tau}} w^2  \,dx\\
& \leq \varepsilon  \int_{B_{\tau}} \big| w^2 - (w^2)_{B_{\tau}}  \big|^2 \,dx +  \frac{C_\varepsilon}{(\tau - \rho)^2} \int_{B_{\tau}} |u -  u_{B_{\tau}} |^2 \,dx  + C \frac{| u_{B_{\tau}} |}{\tau - \rho} \int_{B_{\tau}} w^2  \,dx
\end{align*}

\noi
where $\varepsilon >0$ will be determined later.
The last two terms can be easily bounded by
\[
  \frac{C_\varepsilon}{(\tau - \rho)^2} \int_{B_{2}} |\nabla u |^2 \,dx  + \frac{C}{\tau-\rho} \int_{B_{2}} w^2 \,dx \int_{B_{2}} |u| \,dx,
\]
where we used Poincar\'e's inequality. For the first term, by Poincar\'e inequality, H\"older's inequality, and Young's inequality, we arrive at
\begin{align*}
 & \hspace{-6mm} \int_{B_{\tau}} \big| w^2 -(w^2)_{B_{\tau}}  \big|^2  \,dx \\
& \leq C \left( \int_{B_{\tau}} \big| \nabla (w^2) \big|  \,dx \right)^{2} \\
& \leq C\int_{B_{\tau}} | \nabla w  |^2  \,dx  \int_{B_{2}} | w  |^2  \,dx  .
\end{align*}
To summary, by choosing $\varepsilon$ small enough and applying \eqref{uni-loc-2} we have
\begin{align}
\label{uni-loc-I2}
I_3 \le  \frac18 \int_{B_{\tau}} | \nabla w  |^2  \,dx  + \frac{C}{\tau-\rho} +  \frac{C}{(\tau-\rho)^2}.
\end{align}

For the term $I_4$, the proof is similar to that of $I_3$, we have
\begin{align*}
I_4 \le  \frac18 \int_{B_{\tau}} | \nabla h  |^2  \,dx  + \frac{C}{\tau-\rho} +  \frac{C}{(\tau-\rho)^2}.
\end{align*}

To bound $I_5$, we use a similar argument as that of $I_3$ but with the following application of Poincar\'e inequality instead
\begin{align*}
 \int_{B_{\tau}} \big| wh -(wh)_{B_{\tau}}  \big|^2  \,dx
 \leq C \int_{B_{\tau}} | \nabla h  |^2  \,dx  \int_{B_{2}} | w  |^2  \,dx +C \int_{B_{\tau}} | \nabla w  |^2  \,dx  \int_{B_{2}} | h  |^2  \,dx .
\end{align*}
One then gets
\begin{align*}
I_5 \le \frac18 \int_{B_{\tau}} | \nabla h  |^2  \,dx +  \frac18 \int_{B_{\tau}} | \nabla w  |^2  \,dx  + \frac{C}{\tau-\rho} +  \frac{C}{(\tau-\rho)^2}.
\end{align*}

Lastly, for the term $I_6$, by using H\"older inequality and Gagliardo-Nirenberg inequality, we have
\begin{align*}
\hspace{-6mm} \int \phi^2 h H \,dx
&\leq \| \phi \nabla b \|^2_{L^4 (B_\tau)} \| \nabla u \|_{L^2 (B_2)} \\
&\leq C \| \phi \nabla b \|_{L^2 (B_\tau)}\| \nabla(\phi \nabla b) \|_{L^2 (B_\tau)}\| w \|_{L^2 (B_2)} \\
&\leq  \frac18\| \nabla h \|_{L^2 (B_\tau)}^2+C+\frac{C}{\tau-\rho},
\end{align*}
where we also used (\ref{eq:embeding inequ}).

By denoting
\[
g(r) = \int_{B_{r}} | \nabla h  |^2  \,dx +  \int_{B_{r}} | \nabla w  |^2  \,dx ,
\]
we finally arrive at
\[
g(\rho) \le \frac12 g(\tau) + C + \frac C{\tau-\rho} +  \frac C{(\tau-\rho)^2}
\]
for all $\frac34 \le \rho < \tau \le 2$.
Then an application of Lemma \ref{iter} yields
\[
 \int_{B_1} | \nabla h  |^2  \,dx +  \int_{B_1} | \nabla w  |^2  \,dx \leq C.
\]
Then the desired bound \eqref{uni-loc2} follows.
\end{proof}

One direct consequence of Lemma \ref{le-uni-loc2} is the boundedness of $u$ and $ b $.

\begin{cor}
\label{bdd}
With the same assumptions as Lemma \ref{le-uni-loc2}, we have
\[
\| u \|_{L^\infty(\R^2; \R^2)} + \| b \|_{L^\infty(\R^2; \R^2)}\leq C(p) < \infty.
\]
\end{cor}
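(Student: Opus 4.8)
The plan is to promote the uniform–local $H^2$ control furnished by Lemma \ref{le-uni-loc2} to a global $L^\infty$ bound, using the two–dimensional Sobolev embedding $H^2\hookrightarrow L^\infty$ on unit balls together with translation invariance.

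First I would assemble the uniform–local estimates already at hand. From Corollary \ref{le-uni-loc} applied with $R=2$, combined with the normalization $\|u\|_{L^p(\R^2)}+\|b\|_{L^p(\R^2)}\le 1$, one gets $\sup_{x_0\in\R^2}\int_{B_1(x_0)}\big(|\nabla u|^2+|\nabla b|^2\big)\,dx\le C(p)$; from Lemma \ref{le-uni-loc2}, $\sup_{x_0\in\R^2}\int_{B_1(x_0)}\big(|\nabla^2 u|^2+|\nabla^2 b|^2\big)\,dx\le C(p)$. To control the remaining $L^2$ part of the $H^2$ norm I would use that $p>2$: by H\"older's inequality $\|u\|_{L^2(B_1(x_0))}\le |B_1|^{\frac12-\frac1p}\|u\|_{L^p(B_1(x_0))}\le C\|u\|_{L^p(\R^2)}\le C$, and likewise for $b$, with $C$ independent of $x_0$. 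Altogether $\sup_{x_0}\big(\|u\|_{H^2(B_1(x_0))}+\|b\|_{H^2(B_1(x_0))}\big)\le C(p)$.

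Next, since the spatial dimension is $2<4$, the Sobolev embedding theorem gives $H^2(B_1)\hookrightarrow L^\infty(B_1)$ (indeed into $C^{0,\alpha}(\overline{B_1})$ for any $\alpha\in(0,1)$), with an embedding constant that is the same on every translate $B_1(x_0)$ by translation invariance of Lebesgue measure. Applying this on each $B_1(x_0)$ yields $\|u\|_{L^\infty(B_1(x_0))}+\|b\|_{L^\infty(B_1(x_0))}\le C\big(\|u\|_{H^2(B_1(x_0))}+\|b\|_{H^2(B_1(x_0))}\big)\le C(p)$, and taking the supremum over $x_0\in\R^2$ gives the desired bound $\|u\|_{L^\infty(\R^2;\R^2)}+\|b\|_{L^\infty(\R^2;\R^2)}\le C(p)$.

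I do not expect a genuine obstacle in this step; the two points that need a word of care are the uniformity in the center $x_0$ of all the local bounds — which is built into the statements of Corollary \ref{le-uni-loc} and Lemma \ref{le-uni-loc2} and into the fact that $\|u\|_{L^p(B_1(x_0))}\le\|u\|_{L^p(\R^2)}$ — and the fact that $u,b\in H^2_{\textup{loc}}(\R^2)$, which is part of the interior regularity already invoked (Remark \ref{rmk-smooth}) and which makes the pointwise evaluation meaningful. As an alternative to $H^2\hookrightarrow L^\infty$ one could argue via $\nabla u,\nabla b\in W^{1,2}_{\textup{uloc}}\hookrightarrow L^q_{\textup{uloc}}$ for all finite $q$, hence $u,b\in W^{1,q}_{\textup{uloc}}$ for some $q>2$, and then Morrey's embedding; either route gives the same conclusion.
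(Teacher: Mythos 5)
Your argument is correct and is exactly the intended one: the paper states Corollary \ref{bdd} as a direct consequence of Lemma \ref{le-uni-loc2} without writing out a proof, and the implicit reasoning is precisely your combination of the uniform local $L^2$, $\dot H^1$ and $\dot H^2$ bounds with the translation-invariant embedding $H^2(B_1(x_0))\hookrightarrow L^\infty(B_1(x_0))$ in dimension two. Your care about uniformity in $x_0$ and about the $L^2$ piece via H\"older (using $p>2$) fills in the details the paper leaves to the reader.
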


%
%

\subsection{Case $p=\infty$.}

\noi
In Subsection 4.3, it is showed that $u,b \in L^\infty $ provided $ u,b \in L^p (\R^2 ; \R^2)$ for some $p\in (6,\infty)$.
Next we shall start with $u,b \in L^\infty $. At first, we strengthen the local estimate \eqref{uni-loc2} into a global one.
More precisely, we have

\begin{lem}
\label{d2u-bound}
Let smooth solutions $u,b$ to the MHD equations \eqref{eq:MHD} satisfying 
\beno
\| u \|_{L^\infty(\R^2; \R^2)} + \| b \|_{L^\infty(\R^2; \R^2)}\leq 1.
\eeno
Then, there exists an absolute constant $C_*$ such that if
\beno
\|b\|_{L^1(\R^2)}  \leq C_*  \min\{ \mu\nu, \mu^{\frac12}\nu^{\frac32}\} ,
\eeno
there holds
\[
 \int_{\R^2} | \nabla^2 u  |^2  \,dx +  \int_{\R^2} | \nabla^2 b  |^2  \,dx \leq C.
\]
\end{lem}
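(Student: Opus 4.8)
The plan is to upgrade the uniform--local $\dot H^2$ bound of Lemma~\ref{le-uni-loc2} to a global one. The key preliminary fact, which supplies the smallness that makes everything work, is a Caccioppoli inequality for $b$: multiplying the second equation of \eqref{eq:MHD} by $\eta_R^2 b$ (with $\eta_R$ as in \eqref{etaR}), noting that $\mathrm{div}\,u=0$ renders the transport term harmless, and integrating the Lorentz forcing $\int\eta_R^2\,(b\cdot\nb u)\cdot b\,dx$ by parts once so that a bare $b$ (not $\nb b$) appears, one is left with terms controlled by $\|u\|_{L^\infty}\le1$ and by $\|b\|_{L^2(\R^2)}^2\le\|b\|_{L^\infty}\|b\|_{L^1(\R^2)}\le\|b\|_{L^1(\R^2)}$; Young's inequality and $R\to\infty$ then give $\|\nb b\|_{L^2(\R^2)}^2\le C(\mu,\nu)\|b\|_{L^1(\R^2)}$, and in particular $\|h\|_{L^2(\R^2)}^2\le C(\mu,\nu)\|b\|_{L^1(\R^2)}$. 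Under the hypothesis $\|b\|_{L^1(\R^2)}\le C_*\min\{\mu\nu,\mu^{\frac12}\nu^{\frac32}\}$ these two norms can be made as small as desired. I would also record that, since $u,b$ are smooth, every integral below is finite on each ball, and that Corollaries~\ref{le-uni-loc} and \ref{coro-growth} (whose right-hand sides involve only local $L^p$ norms) apply with $p=\infty$, yielding the uniform local Dirichlet bound and the at most linear growth $\int_{B_R}|\nb u|^2+|\nb b|^2\,dx\les 1+R$.

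Next I would bound $\nb^2 u$ in $L^2(\R^2)$ by multiplying the first equation of \eqref{eq:MHDv-2D} by $\eta_R^2 w$. The term $\int\nb w\cdot\nb(\eta_R^2)\,w\,dx$ contributes $\tfrac\mu8\int\eta_R^2|\nb w|^2\,dx+O(R^{-1})$ after using $\|w\|_{L^2(B_{2R})}^2\les R$; the transport term is $O(1)$ by the same growth bound together with $\|u\|_{L^\infty}\le1$; and the magnetic term $\int(b\cdot\nb h)\,\eta_R^2 w\,dx$ is integrated by parts so as to move the derivative off $h$ onto $\eta_R^2$ and $w$, and then estimated by $\|b\|_{L^\infty}$, $\|h\|_{L^2(\R^2)}$ and Young's inequality so that a small multiple of $\int\eta_R^2|\nb w|^2\,dx$ is absorbed. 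Since all source terms on the right are then bounded uniformly in $R$, one gets $\int_{B_R}|\nb w|^2\,dx\le C$ uniformly (directly, or via Lemma~\ref{iter} on $[R/2,R]$), and then \eqref{eq:embeding inequ} with cutoff $\eta_{R/2}$ gives $\int_{B_{R/2}}|\nb^2 u|^2\,dx\le C$; letting $R\to\infty$ yields $\nb^2 u\in L^2(\R^2)$.

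Finally I would bound $\nb^2 b$ in $L^2(\R^2)$ by multiplying the second equation of \eqref{eq:MHDv-2D} by $\eta_R^2 h$. The linear and transport terms are handled exactly as before, now with all $b$-quantities globally $L^2$-small; the coupling term $\int(b\cdot\nb w)\,\eta_R^2 h\,dx$ is bounded directly by $\|b\|_{L^\infty}\|\nb w\|_{L^2(B_{2R})}\|h\|_{L^2(\R^2)}\le C\|b\|_{L^1(\R^2)}^{1/2}$ using the previous step. The delicate term is $\int H\,\eta_R^2 h\,dx$ with $H$ as in \eqref{H}: since $H$ is quadratic in first derivatives and $h\simeq\nb b$, a crude bound would produce $\|\nb u\|_{L^2(B_R)}$, which is only known to grow. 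Instead, in each summand of $H$ I would integrate the $u$-derivative by parts (moving $\partial_\ell$ off the factor $\partial_\ell u_m$), generating terms of the schematic shapes $\int\eta_R^2\,u\,\nb h\,\nb b\,dx$, $\int\eta_R^2\,u\,h\,\nb^2 b\,dx$ and a boundary term $\int\nb(\eta_R^2)\,u\,h\,\nb b\,dx$; using $\|u\|_{L^\infty}\le1$, the global smallness of $\|\nb b\|_{L^2(\R^2)}$ and $\|h\|_{L^2(\R^2)}$ from the Caccioppoli step, and \eqref{eq:embeding inequ} to trade $\int\eta_R^2|\nb^2 b|^2\,dx$ against $\int\eta_R^2|\nb h|^2\,dx$ up to lower-order terms, each of these is either absorbed into the left side or bounded uniformly in $R$. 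This gives $\int_{B_R}|\nb h|^2\,dx\le C$ uniformly, hence $\nb^2 b\in L^2(\R^2)$ with the asserted bound.

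The step I expect to be the main obstacle is the estimate of $\int H\,\eta_R^2 h\,dx$ (and, to a lesser degree, of the $b$-coupling terms): because the Dirichlet integral of $u$ is only controlled with linear growth, no estimate of the $H$-term that is not linear in a globally bounded quantity can be tolerated, so one is forced to integrate the $u$-derivative by parts and to exploit the global $L^2$-smallness of $\nb b$ and $h$ coming from the Caccioppoli inequality --- which is exactly where the smallness assumption on $\|b\|_{L^1(\R^2)}$ is used, and which accounts for the appearance of $\min\{\mu\nu,\mu^{\frac12}\nu^{\frac32}\}$. One must also be a little careful that all these integrations by parts and absorptions are legitimate, which follows from the smoothness of $u$ and $b$ (Remark~\ref{rmk-smooth}).
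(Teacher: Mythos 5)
Your argument is correct, but it reaches the conclusion by a genuinely different route than the paper. Your key new ingredient is the global Caccioppoli estimate obtained by testing the $b$-equation of \eqref{eq:MHD} with $\eta_R^2 b$, which together with $\|b\|_{L^2(\R^2)}^2\le\|b\|_{L^\infty}\|b\|_{L^1}\le\|b\|_{L^1}$ yields $\|\nb b\|_{L^2(\R^2)}^2\le C\nu^{-2}\|b\|_{L^1}$; once $\nb b$ and $h$ are known to be globally (and smally) in $L^2$, the two equations of \eqref{eq:MHDv-2D} can be treated one after the other with the single cutoff $\eta_R$ and plain Young absorption, and the $H$-term is tamed by moving the $u$-derivative onto the remaining factors and invoking \eqref{eq:embeding inequ} for $b$. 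The paper never establishes $\nb b\in L^2(\R^2)$ at this stage: it keeps everything on the local scale $\frac{3R}{4}\le\rho<\tau\le R$, exploits the cancellation $\int b\cdot\nb h\,\phi^2w+\int b\cdot\nb w\,\phi^2h=-\int b\cdot\nb(\phi^2)hw$ to handle the coupling, extracts the smallness of $\|b\|_{L^1}$ only inside $I_6$ via the Gagliardo--Nirenberg inequality $\|\phi b\|_{L^4}\le\|\nb^2(\phi b)\|_{L^2}^{1/2}\|\phi b\|_{L^1}^{1/2}$, and closes with the Giaquinta iteration Lemma \ref{iter}. Your route buys a cleaner decoupling (no iteration lemma, no cancellation needed) and in fact shows that for this particular lemma only $\|b\|_{L^1}<\infty$ is required, since every absorption is by Young's inequality with a free parameter --- the genuine smallness is only consumed later, in Lemma \ref{d2u-tri}; what the paper's version buys is that it never has to test the untransformed (pressure-sensitive) $b$-equation against a non-divergence-free test function, whereas your Caccioppoli step implicitly uses the strong form of the second equation of \eqref{eq:MHD} --- legitimate for smooth solutions and consistent with the paper's own conventions, but worth a sentence of justification.
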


\begin{proof}

In view of \eqref{eq:embeding inequ}, it suffices to show
\[
\int_{\R^2} | \nabla w  |^2  \,dx +  \int_{\R^2} | \nabla h  |^2  \,dx \les 1.
\]
The proof is a modification of the proof of Lemma \ref{le-uni-loc2}.
As in the proof of Lemma \ref{le-uni-loc2}, we get
\begin{align}
\label{I1-I6}
& \hspace{-6mm} \mu\int \phi^2 |\nabla w |^2 \,dx + \nu\int \phi^2 |\nabla h|^2 \,dx \notag \\
& = -  \int \nabla w  \cdot \nabla \phi^2 w  \,dx
-  \int  \nabla h \cdot \nabla \phi^2  h \,dx+ \frac12 \int u \cdot \nabla  \phi^2 w^2 \,dx  \notag\\
& \hspace{4mm}  + \frac12 \int u \cdot \nabla  \phi^2  h^2 \,dx - \int b\cdot \nabla \phi^2 hw \,dx + \int H \phi^2 h \,dx \notag\\
& = I_1 + \cdots +I_6.
\end{align}

\noi
where $\phi$ is a test function as in the proof of Lemma \ref{le-uni-loc} with $|\nabla \phi| \leq \frac{C}{\tau-\rho}$ and $|\nabla^2 \phi| \leq \frac{C}{(\tau-\rho)^2}$.
We shall show all the above terms are bounded uniformly in $R$.

For the term $I_1$, by Corollary \ref{coro-growth} we have
\[
|I_1| = \frac12 \int  w^2   |\Delta \phi^2 | \,dx \leq  \frac{C}{(\tau-\rho)^2} \int_{T_\tau} w^2 \,dx \leq  \frac{C \tau}{(\tau-\rho)^2},
\]
where $T_\tau=B_{\tau} \setminus B_{\frac23 \tau}.$
For the term $I_3$, since $u\in L^\infty$  and then we have
\[
|I_3| = C \int  w^2   |\nabla \phi^2|  \,dx \leq  \frac{C}{\tau-\rho} \int_{T_\tau} w^2 \,dx \leq \frac{C\tau}{\tau-\rho} .
\]

\noi
For the term $I_2,I_4,I_5$, similar as $I_1$ and $I_3$ we have
\[
|I_2| + |I_4|+ |I_5|   \leq
\frac{C\tau}{\tau-\rho} + \frac{C \tau}{(\tau-\rho)^2}.
\]


Now we turn to the term $I_6$, which is the main difficulty. Without loss of any generality, we may assume $H = \partial_{2} b_2 \partial_{2} u_1$. Applying integration by parts we obtain
\begin{align*}
| I_6 | & = \left| \int \phi^2 h \partial_{2} b_2 \partial_{2} u_1\, dx \right| \\
& \le \left|  \int \partial_{2} \phi^2 h \partial_{2} u_1b_2\,dx \right|
+ \left| \int  \phi^2 \partial_{2} h \partial_{2} u_1 b_2  \,dx \right|
+ \left| \int  \phi^2  h \partial^2_{2} u_1b_2  \,dx \right| \\
& = I_{61} + I_{62} + I_{63}.
\end{align*}

\noi
The first term can be bounded easily by using $b \in L^\infty$ and Corollary \ref{coro-growth},
\[
I_{61} \le \frac{C}{\tau-\rho} \int_{B_\tau} | h \partial_{2} u| \,dx \le \frac{C \tau}{\tau-\rho}.
\]

\noi
The terms $I_{62}$ and $I_{63}$ can be treated in a similar way. Therefore we only
estimate the former,
for which we have
\begin{align*}
& \hspace{-6mm} \int  \phi^2 \partial_{2} h \partial_{2} u_1 b_2  \,dx \\
& =  \int  \partial_{2} h \partial_{2} (u_1\phi) \phi b_2  \,dx - \int  \partial_{2} h u_1 \partial_{2} \phi  b_2 \phi  \,dx  \\
 & = I_{621}+ I_{622}.
\end{align*}

\noi
We notice that the term $I_{622}$ is easy to control,
\begin{align*}
I_{622} & \le \frac{C}{\tau-\rho} \|\nabla h\|_{L^2(B_\tau)}   \|\phi u \|_{L^4} \|b\|_{L^4} \\
& \le \frac{C}{\tau-\rho} \|\nabla h\|_{L^2(B_\tau)}   \| u \|_{L^4 (B_\tau)} \|b\|_{L^1}^{\frac14} \|b\|_{L^\infty}^{\frac34} \\
& \le \frac{C \tau^{\frac12}}{\tau-\rho} \|\nabla h\|_{L^2(B_\tau)}   \| u \|_{L^\infty (B_\tau)} \|b\|_{L^1}^{\frac14} \|b\|_{L^\infty}^{\frac34},
\end{align*}

\noi
which is sufficient for our purpose.
Now we turn to the term $I_{621}$,
\begin{align*}
I_{621} & \le \| \nabla h\|_{L^2(B_\tau)} \| \partial_{2} (u_1 \phi) \phi b_2\|_{L^2} \\
& \le  \| \nabla h\|_{L^2(B_\tau)} \| \nabla (u \phi)\|_{L^4 (\R^2)} \| \phi b\|_{L^4 (\R^2)}.
\end{align*}

\noi
Then we apply the following two Gagliardo-Nirenberg inequalities,
\[
\| \nabla f\|_{L^4(\R^2)} \le \| \nabla^2 f\|_{L^2(\R^2)}^{\frac12} \|f\|_{L^\infty(\R^2)}^{\frac12} ,
\]
and
\[
\|  f\|_{L^4(\R^2)} \le  \| \nabla^2 f\|_{L^2(\R^2)}^{\frac12} \|f\|_{L^1(\R^2)}^{\frac12},
\]
to obtain
\begin{align*}
I_{621} & \le  C \| \nabla h\|_{L^2(B_\tau)}  \left( \| \nabla^2 (u\phi) \|_{L^2 (\R^2)} \| \nabla^2 (b\phi) \|_{L^2 (\R^2)} \right)^{\frac12}  \| \phi b\|_{L^1 (\R^2)}^{\frac12} \| \phi u\|_{L^\infty (\R^2)}^{\frac12} \\
& \le \frac14 \left( \int_{B_\tau} \mu|\nabla h|^2+\nu|\nabla w|^2 \,dx \right)+C+\frac{C\tau}{(\tau-\rho)^2}+\frac{C\tau}{(\tau-\rho)^4},
\end{align*}
provided $\|b\|_{L^1} $ is small enough and we used (\ref{eq:embeding inequ}) and  Corollary \ref{coro-growth}.

Finally, by setting
\[
g(r) = \int_{B_r}   |\nabla h|^2+|\nabla w|^2 \,dx,
\]
we arrive at
\[
g(\rho) \le \frac12 g(\tau) + C + \frac{CR }{\tau-\rho} + \frac{CR}{(\tau-\rho)^4},
\]

\noi
where $\frac{3R}{4}\leq \rho < \tau \leq  R$.

Hence, by Lemma \ref{iter}, we have
\beno
\int_{B_{R/2}} |\nabla w |^2 +   |\nabla h|^2 \,dx\leq C+\frac{C}{{R^3}}.
\eeno
Letting $R\rightarrow\infty$, the proof is complete.
\end{proof}


In fact, by assuming $b$ is small enough in $L^1$ space,  we can conclude that $\nabla^2u$ and $ \nabla^2 b$ are both trivial.
More precisely, we have

\begin{lem}
\label{d2u-tri}
Let $(u,b)$ be a weak solution of the 2D MHD equations (\ref{eq:MHD}) defined over the entire plane.
Assume that 
\beno
\|u\|_{L^\infty(\mathbb{R}^2)}+\|b\|_{L^\infty(\mathbb{R}^2)}\leq 1.
\eeno
There exists a constant $C_*$ such that if
\beno
\|b\|_{L^1(\R^2)}  \leq C_*  \min\{ \mu\nu, \mu^{\frac12}\nu^{\frac32}\} ,
\eeno
then
\[
\nabla^2u \equiv0,\quad  \nabla^2 b \equiv0.
\]
\end{lem}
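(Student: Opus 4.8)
The plan is to re-run the weighted vorticity--current estimate used to prove Lemma~\ref{d2u-bound}, but now exploiting the \emph{global} bound $\nabla^2u,\nabla^2b\in L^2(\R^2)$ furnished by Lemma~\ref{d2u-bound} (whose hypotheses are contained in those of Lemma~\ref{d2u-tri} after shrinking $C_*$, and under which the weak solution is smooth by Remark~\ref{rmk-smooth}). First I would record the consequences to be used. Since $\Delta w=\mu^{-1}(u\cdot\nabla w-b\cdot\nabla h)\in L^2(\R^2)$ and $\Delta h=\nu^{-1}(u\cdot\nabla h-b\cdot\nabla w-H)\in L^2(\R^2)$, we get $\nabla w,\nabla h\in W^{1,2}(\R^2)$, hence $\nabla w,\nabla h\in L^q(\R^2)$ for all $q\in[2,\infty)$ by Sobolev embedding and $|w|+|h|=o(\sqrt{\ln|x|})$ uniformly by Lemma~\ref{unidec}; combined with the uniform local $L^2$ bound, $w,h\in L^\infty(\R^2)$ as well. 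From $b\in L^1\cap L^\infty$ one has $b\in L^p(\R^2)$ for every $p$, with $\|b\|_{L^4}\les\|b\|_{L^1}^{1/2}\|\nabla^2b\|_{L^2}^{1/2}$ by Gagliardo--Nirenberg; from the $L^\infty$ bounds, $\|\nabla u\|_{L^4},\|\nabla b\|_{L^4}\les\|\nabla^2(\cdot)\|_{L^2}^{1/2}$; and from Corollary~\ref{coro-growth}, $\int_{B_R}|\nabla u|^2+|\nabla b|^2\les R$. These facts — unavailable in Lemma~\ref{d2u-bound} — are exactly what will let the cut-off error terms vanish or be absorbed.

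Next I would fix a Giaquinta cut-off $\phi\in C_0^\infty(B_\tau)$, $\phi\equiv1$ on $B_\rho$, $|\nabla^k\phi|\les(\tau-\rho)^{-k}$, with $R\le\rho<\tau\le 2R$, test $(\ref{eq:MHDv-2D})_1$ with $\phi^2w$ and $(\ref{eq:MHDv-2D})_2$ with $\phi^2h$, add them, and use the magnetic cancellation $\int(b\cdot\nabla h)\phi^2w+\int(b\cdot\nabla w)\phi^2h=-\int hw\,b\cdot\nabla(\phi^2)$ to obtain, as in the proof of Lemma~\ref{d2u-bound},
\[
\mu\!\int\!\phi^2|\nabla w|^2+\nu\!\int\!\phi^2|\nabla h|^2=I_1+\cdots+I_6 .
\]
The terms $I_1,\dots,I_5$ are supported where $\nabla(\phi^2)\ne0$; using $\|u\|_{L^\infty},\|b\|_{L^\infty}\le1$, the Wirtinger/angular-average device (as for $I_5$ in the proof of Lemma~\ref{lem:D2u}, noting that $\int_0^{2\pi}u_r(r,\theta)\,d\theta=0$ allows one to subtract $\overline{w^2},\overline{h^2},\overline{wh}$), the global square-integrability and decay of $w,h$, and Corollary~\ref{coro-growth}, I would bound $I_1+\cdots+I_5$ by $\tfrac18\big(\mu\!\int_{B_\tau}|\nabla w|^2+\nu\!\int_{B_\tau}|\nabla h|^2\big)+\varepsilon(R)$ with $\varepsilon(R)\to0$ as $R\to\infty$; the decisive improvement over Lemma~\ref{d2u-bound} is that the remainder genuinely tends to $0$ rather than merely staying bounded.

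The term $I_6=\int H\phi^2h$, with $H$ as in \eqref{H}, is the main obstacle, and it is precisely here that the smallness of $\|b\|_{L^1}$ is needed. I would integrate by parts in each of the four products composing $H$, as in the proof of Lemma~\ref{d2u-bound}: the pieces carrying a factor $\nabla\phi$ go into $\varepsilon(R)$, while each surviving piece is estimated by H\"older's inequality, the inequality \eqref{eq:embeding inequ} (which replaces $\int\phi^2|\nabla^2u|^2$ by $\tfrac43\int\phi^2|\nabla w|^2$ up to a vanishing remainder, and likewise for $b$), and the Gagliardo--Nirenberg inequalities above. The key point is that writing, e.g., $\|\phi h\|_{L^4}\les\|\nabla^2(\phi b)\|_{L^2}^{1/2}\|b\|_{L^\infty}^{1/2}$ and $\|\phi b\|_{L^4}\les\|\nabla^2(\phi b)\|_{L^2}^{1/2}\|b\|_{L^1}^{1/2}$ keeps these factors \emph{linear} in the dissipation, so that
\[
|I_6|\le C\,\|b\|_{L^1}^{1/2}\,\mu^{-a}\nu^{-c}\Big(\mu\!\int\!\phi^2|\nabla w|^2+\nu\!\int\!\phi^2|\nabla h|^2\Big)+\varepsilon(R),
\]
with $(a,c)=(\tfrac12,\tfrac12)$ for the contributions in which $b$ enters through a single derivative of $u$ and $(a,c)=(\tfrac14,\tfrac34)$ for those in which it enters through $\nabla^2(\phi b)$ — exactly the balancing that produces $\min\{\mu\nu,\ \mu^{1/2}\nu^{3/2}\}$ in Theorem~\ref{thm1}. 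Choosing $C_*$ so that $\|b\|_{L^1}\le C_*\min\{\mu\nu,\mu^{1/2}\nu^{3/2}\}$ makes all these constants $\le\tfrac1{16}$.

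Collecting the estimates yields
\[
\mu\!\int_{B_\rho}\!|\nabla w|^2+\nu\!\int_{B_\rho}\!|\nabla h|^2\ \le\ \tfrac12\Big(\mu\!\int_{B_\tau}\!|\nabla w|^2+\nu\!\int_{B_\tau}\!|\nabla h|^2\Big)+\varepsilon(R),\qquad \varepsilon(R)\xrightarrow[R\to\infty]{}0,
\]
so Lemma~\ref{iter} on $[R,2R]$ and then $R\to\infty$ force $\int_{\R^2}|\nabla w|^2+|\nabla h|^2=0$, i.e.\ $\nabla w\equiv\nabla h\equiv0$; feeding this into \eqref{eq:embeding inequ} (using $\int_{B_\tau}|\nabla u|^2\les\tau=o((\tau-\rho)^2)$ in the limit) gives $\nabla^2u\equiv\nabla^2b\equiv0$. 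The step I expect to be genuinely delicate is the control of $I_6$: every integration by parts must be routed so that the magnetic field always carries an $L^1$-factor (which is small) \emph{and} still pairs with enough of the dissipation to be absorbed, which is what forces both the integrability gains of the first paragraph and the precise $\mu,\nu$-dependence in the hypothesis.
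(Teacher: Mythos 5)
Your proposal is correct and follows essentially the same route as the paper: start from the global bound $\nabla^2u,\nabla^2b\in L^2(\R^2)$ of Lemma \ref{d2u-bound}, rerun the localized vorticity--current energy identity, show $I_1,\dots,I_5$ vanish as $R\to\infty$, and absorb $I_6$ using the Gagliardo--Nirenberg splitting $\|h\|_{L^4}\|b\|_{L^4}\lesssim\|b\|_{L^\infty}^{1/2}\|b\|_{L^1}^{1/2}\|\nabla^2b\|_{L^2}$ together with the smallness of $\|b\|_{L^1}$, which is exactly how the $\min\{\mu\nu,\mu^{1/2}\nu^{3/2}\}$ threshold arises in the paper. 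The only divergence is cosmetic: for $I_3$--$I_5$ you invoke the Wirtinger/angular-average device, whereas the paper bounds $\int_{B_R\setminus B_{R/2}}w^2$ by $\|\nabla(u\chi_R)\|_{L^2}^2$ and uses that $\|\nabla^2u\|_{L^2(T_R)}\to0$, but both yield the needed $\varepsilon(R)\to0$.
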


\begin{proof}
In view of Lemma \ref{d2u-bound}, we may assume $\nabla^2 u, \nabla^2 b \in L^2(\mathbb{R}^2)$ in what follows.
We shall revisit the proof of Lemma \ref{d2u-bound} and show the terms $I_1$ to $I_5$ in \eqref{I1-I6}
vanishes and $I_6$ becomes small as $R$ goes to infinity.

Let $\phi$ be replaced by $\eta_R$ in Lemma \ref{d2u-bound}, then one notices that the terms $I_1$ and $I_2$ tend to zero as $R\to \infty$. The treatments for terms $I_2,I_4,I_5$ are similar, thus we only focus on the term $I_3$. Let $\chi(x)$ be a test function such that
\begin{align*}
\chi_{_R}(x)=\left\{
\begin{aligned}
&1,\quad x\in B_R\setminus B_{R/2},\\
&0, \quad x\in B_{2R}^c\cup B_{R/4}.
\end{aligned}
\right. \end{align*}
and $|\nabla^k \chi | \le \frac{C}{R^k}$. Then
\begin{align*}
I_3&\leq \frac{C}{R} \int_{B_R\setminus B_{R/2}} |u | w^2 \,dx \le \frac{C}{R} \int_{B_R\setminus B_{R/2}} w^2 \,dx \leq \frac{C}{R}\|\nabla (u\chi_{_R})\|_{L^2(T_R)}^2\\
&\leq \frac{C}{R} \|u\|_{L^2(T_R)}\left[\|\chi_{_R}\nabla^2 u\|_{L^2(T_R)}+\|\nabla u |\nabla\chi_{_R}| \|_{L^2(T_R)}+\||u|\nabla^2 \chi_{_R}\|_{L^2(T_R)}\right]\\
&\leq C \|\nabla^2 u\|_{L^2(T_R)}+ \frac{C}{\sqrt{R}},
\end{align*}
where $T_R = B_{2R} \setminus B_{R/4}$, and we used Corollary \ref{coro-growth}.
Obviously, $I_3$ tends to zero as $R\to \infty$ by Lemma \ref{d2u-bound}.

Now we turn to the term $I_6$. Unlike other terms, we will not show $I_6$ goes to zero as $R\to \infty$,
instead we shall show $I_6$ tends to something smaller than the left hand side of \eqref{I1-I6}, which implies
the desired result.

Without loss of any generality, assume $H =  \partial_{2} b_2\partial_{2} u_1$. Therefore,
\begin{align*}
I_{6} & =  \int_{\mathbb{R}^2} \phi h \partial_{2} b_2\partial_{2} u_1\,dx\\
& = - \int_{\mathbb{R}^2} \phi h \partial_{2}^2u_1 b_2\,dx-\int_{\mathbb{R}^2} \phi \partial_{2}h \partial_{2}u_1 b_2 \,dx-\int_{\mathbb{R}^2} \partial_{2}\phi h \partial_{2}u_1 b_2 \,dx \\
& =J_1+J_2+J_3.
\end{align*}

\noi
Then we shall estimate $J_1$, $J_2$, and $J_3$ one by one. For the term $J_1$,
\begin{align*}
J_1&\leq \|\partial_{2}^2 u_1\|_{L^2(\mathbb{R}^2)}\|h\|_{L^4(\mathbb{R}^2)}\|b\|_{L^4(\mathbb{R}^2)}\\
&\leq C \|\partial_{2}^2 u\|_{L^2(\mathbb{R}^2)}\|b\|_{L^\infty(\mathbb{R}^2)}^{\frac12}\|\nabla^2 b\|_{L^2(\mathbb{R}^2)}^{\frac12}\|b\|_{L^1(\mathbb{R}^2)}^{\frac12}\|\nabla^2b\|_{L^2(\mathbb{R}^2)}^{\frac12}\\
&\leq \frac{\mu}{4}\|\nabla^2 u\|_{L^2(\mathbb{R}^2)}+\frac{\nu}{4}\|\nabla^2 b\|_{L^2(\mathbb{R}^2)},
\end{align*}

\noi
provided $\|b\|_{L^1} $ sufficiently small, that is $ \|b\|_{L^1}\leq C_* \mu\nu$
holds for a small $C_*$.
For the term $J_2$, it can be estimated in the same way
\begin{align*}
J_2&\leq \|\partial_{2}h\|_{L^2(\mathbb{R}^2)}\|\partial_2 u\|_{L^4(\mathbb{R}^2)}\|b\|_{L^4(\mathbb{R}^2)}\\
&\leq C \|\partial_{2}^2 b\|_{L^2(\mathbb{R}^2)}\|u\|_{L^\infty(\mathbb{R}^2)}^{\frac12}\|\nabla^2 u\|_{L^2(\mathbb{R}^2)}^{\frac12}\|b\|_{L^1(\mathbb{R}^2)}^{\frac12}\|\nabla^2b\|_{L^2(\mathbb{R}^2)}^{\frac12}\\
&\leq \frac{\mu}{4}\|\nabla^2 u\|_{L^2(\mathbb{R}^2)}+\frac{\nu}{4}\|\nabla^2 b\|_{L^2(\mathbb{R}^2)},
\end{align*}
provided $ \|b\|_{L^1}  \leq C_* \mu^{\frac12}\nu^{\frac32}$.
The term $J_3$ can be dealt with similarly as $I_3$,
which also vanishes as $R\to \infty$. Thus the proof is finished.
\end{proof}

Now we are ready to finish the remaining part $6< p \leq \infty$ of Theorem \ref{thm2}.

\begin{proof}[Proof of Theorem \ref{thm2}: case $6< p \leq \infty$]
For $6< p \leq \infty$, assume that $(u,b)$ are nontrivial and
\ben\label{eq:Lp}
\|u\|_{L^p(\mathbb{R}^2)}+\|b\|_{L^p(\mathbb{R}^2)}= L>0
\een
then consider the scaling solution $(u^\lambda(x),b^\lambda(x))$,
where
\beno
u^\lambda(x)=\lambda u(\lambda x),\quad b^\lambda(x)=\lambda b(\lambda x)
\eeno
Then by scaling property we get
\beno
\|u^\lambda\|_{L^p(\mathbb{R}^2)}+\|b^\lambda\|_{L^p(\mathbb{R}^2)}\leq 1
\eeno
if $\lambda^{-\frac{p-2}{p}}=L$. 
By the assumption of Theorem  \ref{thm2}, we get there exists an absolute constant $C_*$ such that
\beno
L^{\frac{p}{p-2}}\|b\|_{L^1(\R^2)}\leq C_* \min\{\mu\nu,\mu^{\frac12}\nu^{\frac32}\}
\eeno
and hence
\ben\label{eq:condition p}
\|b^{\lambda}\|_{L^1(\R^2)} =\lambda ^{-1}\|b\|_{L^1(\R^2)}\leq C_* \min\{\mu\nu,\mu^{\frac12}\nu^{\frac32}\}
\een
Then it follows from Corollary \ref{bdd} and Lemma \ref{d2u-tri} that
\beno
\nabla^2(u^{\lambda}) \equiv0,\quad  \nabla^2 (b^{\lambda}) \equiv0,
\eeno
which implies $u,b$ are constants, since $u, b \in L^p(\R^2)$. This is a contradiction with (\ref{eq:Lp}). Hence  $(u,b)$ are trivial solutions.

The proof is complete.
\end{proof}

\noindent {\bf Acknowledgments.}
W. Wang was supported by NSFC under grant 11671067,
``the Fundamental Research Funds for the Central Universities", and the China Scholarship Council. Y. Wang was partially supported by NSFC under grant 11771140.

\end{document}